\newtheorem{theorem}{Theorem}[section]
\newtheorem{lemma}[theorem]{Lemma}
\newtheorem{proposition}[theorem]{Proposition}
\theoremstyle{definition}
\newtheorem*{question}{Question}
\newtheorem{definition}[theorem]{Definition}
\numberwithin{equation}{section}
\numberwithin{theorem}{section}
\newcommand{\R}{\mathbb{R}}
\newcommand{\Z}{\mathbb{Z}}
\newcommand{\Q}{\mathbb{Q}}
\newcommand{\C}{\mathbb{C}}
\newcommand{\Qbar}{\overline{\Q}}
\newcommand{\cE}{\mathcal{E}}
\newcommand{\cA}{\mathcal{A}}
\newcommand{\cC}{\mathcal{C}}
\newcommand{\bo}[1]{\boldsymbol{#1}}
\newcommand{\mc}[1]{\mathcal{#1}}
\def\lg{\left\lbrace}
\def\rg{\right\rbrace}
\newcommand{\Oseen}{\mathcal{O}}
\author[F. Barroero]{Fabrizio Barroero}
\address{Department of Mathematics and Computer Science, University of Basel, Spiegelgasse 1, 4051 Basel, Switzerland}
\email{fbarroero@gmail.com}
\title[CM relations]{CM relations in fibered powers of elliptic families}
\date{\today}
\subjclass[2010]{Primary 11G05, 11G15; Secondary 11G50, 11U09}
\keywords{Unlikely Intersections, Zilber-Pink conjectures, elliptic curves, complex multiplication}
\thanks{F. B. was supported by the EPSRC grant EP/N007956/1' and the SNF grant 165525.}
\begin{document}

\begin{abstract}
Let $E_\lambda$ be the Legendre family of elliptic curves. Given $n$ linearly independent points $P_1,\dots , P_n \in E_\lambda\left(\overline{\Q(\lambda)}\right)$ we prove that there are at most  finitely many complex numbers $\lambda_0$ such that $E_{\lambda_0} $ has complex multiplication and $P_1(\lambda_0), \dots ,P_n(\lambda_0)$ are dependent over End$(E_{\lambda_0})$. This implies a positive answer to a question of Bertrand and, combined with a previous work in collaboration with Capuano, proves the Zilber-Pink conjecture for a curve in a fibered power of an elliptic scheme when everything is defined over $\Qbar$.
\end{abstract}

\maketitle

\section{Introduction}

Let $E_\lambda$ denote the elliptic curve with equation
\begin{equation}\label{defleg}
Y^2Z=X(X-Z)(X-\lambda Z).
\end{equation}
We see this as a family of elliptic curves $E_\lambda\rightarrow S=\mathbb{A}^1 \setminus \{0,1\}$ and consider the $n$-fold fibered power 
$E_\lambda\times_S \dots \times_S E_\lambda$, for some positive integer $n$. By abuse of notation we indicate this fibered power by $E_\lambda^n$. This defines again a family $E_\lambda^n\rightarrow S$.

Now, suppose we are given an irreducible curve $\mc{C}\subseteq E_\lambda^n$, defined over the algebraic numbers and not contained in a single fiber of the family. This defines $n$ points $P_1, \dots, P_n$ on $E_\lambda(\C(\cC))$, which we suppose to be independent, i.e., there is no generic non-trivial relation between them. In other words, $\cC$ is not contained in a proper subgroup scheme of $E_\lambda^n \rightarrow S$.

For any point $\bo{c} \in \mc{C}(\C)$ we have $n$ specialized points $P_1(\bo{c}), \dots, P_n(\bo{c})$ on the specialized curve $E_{\lambda (\bo{c})}$, which might become dependent over $\Z$ or over a possibly larger endomorphism ring.

In joint work with Capuano \cite{linrel} we proved that there are at most finitely many $\bo{c} \in \mc{C}(\C)$ such that $P_1(\bo{c}), \dots, P_n(\bo{c})$ satisfy two independent relations over $\Z$ (see \cite{MZ12} for the case $n=2$). The Zilber-Pink Conjecture predicts that finiteness holds as well when one considers relations over the endomorphism rings of the fibers and, in the case of CM fibers, one relation is enough.

The main result of this article is the following theorem.

\begin{theorem}\label{mainthm}
Let $\mc{C}\subseteq E_\lambda^n$ be an irreducible curve defined over $\Qbar$, not contained in a fixed fiber of $E_\lambda^n$ and such that the $n$ points $P_1, \dots , P_n$ defined by it are generically independent. Then there are at most finitely many $\bo{c} \in \mc{C}(\C)$ such that $E_{\lambda (\bo{c})}$ has complex multiplication and there exists $(a_1, \dots, a_n) \in \text{\emph{End}} \left(E_{\lambda (\bo{c})} \right)^n \setminus \{0\}$ with
$$
a_1 P_1(\bo{c}) + \dots + a_n P_n (\bo{c})=O.
$$
\end{theorem}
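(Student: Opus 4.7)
The plan is to follow the Pila-Zannier strategy adapted to the CM setting, in the spirit of the earlier work of Masser-Zannier and Habegger-Pila on related problems.

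First, on a simply connected open $U\subset S$, I would choose holomorphic sections $\omega_1(\lambda),\omega_2(\lambda)$ of the period lattice with $\tau=\omega_2/\omega_1\in\mathbb{H}$, together with holomorphic elliptic logarithms $z_i(\lambda)$ of the $P_i$. Writing $\tilde z_i = z_i/\omega_1$, the preimage of $\mc{C}|_U$ is a complex analytic curve $\widetilde{\mc{C}}\subset\mathbb{H}\times\C^n$ which, restricted to a bounded domain, is definable in $\R_{\mathrm{an,exp}}$. A point $\bo{c}\in\mc{C}$ is special precisely when there exist integers $A,B,C,m_1,n_1,\dots,m_n,n_n,p,q$ with $B^2-4AC<0$ and $(m_i,n_i)\neq 0$ such that
\[
A\tau(\bo{c})^2+B\tau(\bo{c})+C=0,\qquad \sum_{i=1}^n(m_i+n_i\tau(\bo{c}))\,\tilde z_i(\bo{c})=p+q\tau(\bo{c}).
\]

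Next, I would package these conditions into a definable set $Z$ parametrizing the integer relation data together with the normalized analytic coordinates along $\widetilde{\mc{C}}$. The Pila-Wilkie counting theorem then yields that either the number of integer points of height $\leq T$ on $Z$ is $O_\varepsilon(T^\varepsilon)$, or $Z$ contains a positive-dimensional semi-algebraic subset. I would exclude the latter possibility via an Ax-Schanuel/Ax-Lindemann type statement for the elliptic scheme (cf.\ the work of Pila, Pila-Tsimerman and Bertrand-Masser-Pillay): a semi-algebraic piece of positive dimension in $Z$ would force $\mc{C}$ to lie in a proper group subscheme of $E_\lambda^n$, contradicting the hypothesis of generic independence of $P_1,\dots,P_n$.

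To conclude, I would invoke a Galois lower bound. For a special $\bo{c}_k$ with $\mathrm{End}(E_{\lambda(\bo{c}_k)})$ of discriminant $D_k$, Siegel's class number estimate produces $\gg|D_k|^{1/2-\varepsilon}$ Galois conjugates of $\bo{c}_k$ over a fixed number field of definition, and each of them is again special. Using Silverman's specialization theorem to control the heights of the $P_i(\bo{c}_k)$, together with David-type estimates for elliptic logarithms at CM parameters, the integer relation data $(A,B,C,m_i,n_i,p,q)$ should be bounded polynomially in $|D_k|$. Since each fixed datum yields only finitely many $\bo{c}$, the existence of infinitely many special points would force $|D_k|\to\infty$ and so produce more integer points of height $\leq T$ on $Z$ than Pila-Wilkie permits.

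The main obstacle will be calibrating the final comparison: one needs an upper bound on the height of the relation data that is polynomial in $|D_k|$, while the Galois count already furnishes the super-polynomial lower bound $\gg|D_k|^{1/2-\varepsilon}$, and the two must be made quantitatively compatible with Pila-Wilkie's $O_\varepsilon(T^\varepsilon)$. This is precisely where the CM setting differs essentially from the two-$\Z$-relation situation of \cite{linrel}, and it is where the most delicate estimates --- combining height bounds for elliptic logarithms on CM fibers with control of the conductors of $\mathrm{End}(E_{\lambda(\bo{c})})$ under Galois conjugation --- will be needed.
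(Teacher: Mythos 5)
Your strategy matches the paper's in broad outline: uniformize and definabilize, count algebraic points of bounded height on the resulting definable set, rule out positive-dimensional semialgebraic pieces by functional transcendence (Bertrand's theorem, which is precisely the needed elliptic Ax--Lindemann statement), produce many Galois conjugates of a special point from Siegel's class-number lower bound, and close by bounding the height of the relation data polynomially in $|D_k|$. Your packaging of the relation via integers $(A,B,C,m_i,n_i,p,q)$ rather than via $H_2$ of quadratic coefficients is a legitimate alternative and does not change the counting input (Corollary 7.2 of \cite{HabPila14} covers $d=1$ as well as $d=2$).

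The gap is in where the polynomial height bound comes from. You invoke ``Silverman's specialization theorem to control the heights of the $P_i(\bo{c}_k)$.'' Silverman's theorem relates $\widehat{h}(P_i(\bo{c}))$ to $h(\bo{c})$; it does not by itself bound $h(\lambda(\bo{c}_k))$ when the endomorphism coefficients are not in $\Z$, and the paper explicitly flags this: ``Silverman's theorem cannot be applied because it does not provide bounded height for the $\bo{c}$ such that the coefficients of the relation between the $P_i(\bo{c})$ are not all in $\Z$.'' What actually supplies the needed input is a purely CM-theoretic height estimate: since $[\Q(j_0):\Q]=\mathrm{cl}(\Oseen_{\lambda_0})$, since every conjugate of $j_0$ corresponds to a CM $\tau'$ with $\mathrm{Im}(\tau')\leq|D_0|^{1/2}$ in the usual fundamental domain, and since $j_0$ is an algebraic integer, the $q$-expansion of $j$ gives $h(j_0)\ll|D_0|^{1/2}$ and hence $h(\lambda_0)\ll|D_0|^{1/2}$. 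This is the missing ingredient, and without it none of the subsequent height bounds close.

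Second, ``David-type estimates for elliptic logarithms at CM parameters'' is not the right tool for producing a small relation. The relation over $\mathrm{End}=\Z[\rho]$ is converted into a $\Z$-relation among the $2n$ points $P_i$ and $\rho P_i$, and one then needs a relation-shrinking theorem of Masser type (Theorem E of \cite{Masser88}) together with: an upper bound on the torsion of the CM fiber over $K_0$ (David's Th\'eor\`eme 1.2 of \cite{David97}), a lower bound on the smallest canonical height of a non-torsion point over $K_0$ (Masser's Corollary 1 of \cite{Masser89}), and a Weil/N\'eron--Tate height comparison (Zimmer). All of these quantities are then fed polynomially by the CM bound $h(\lambda_0)\ll|D_0|^{1/2}$. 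Without this chain, your claim that the relation data is polynomially bounded in $|D_k|$ is unsupported, and the final comparison against the $\gg|D_k|^{1/2-\eps}$ Galois conjugates does not go through. (Also, the conductor/discriminant of $\mathrm{End}(E_{\lambda(\bo{c})})$ does not vary under Galois conjugation --- all conjugates have the same CM discriminant --- so the delicacy you anticipate there is not where the difficulty lies.)
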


In case $n=1$ we have one non-identically torsion point and the theorem says that there are only finitely many specializations such that we have complex multiplication and the point has finite order. This was proved by Andr\'e \cite{andrenote} in unpublished notes and later by Pila in \cite{PilaIMRN}.


The basic idea of Andr\'e's proof is the following: by a theorem of Silverman \cite{Sil83} the height of the points $\bo{c}$ such that $P_1(\bo{c})$ is torsion is bounded while if there were infinitely many $\bo{c}$ such that $E_{\lambda (\bo{c})}$ has CM a result of Colmez \cite{Colmez} would imply that their height must tend to infinity.

In our case Silverman's theorem cannot be applied because it does not provide bounded height for the $\bo{c}$ such that the coefficients of the relation between the $P_i(\bo{c})$ are not all in $\Z$.


Pila did not use Silverman's Theorem and followed the general strategy, first introduced by Pila and Zannier in \cite{PilaZannier}, which has been very successful in proving several new instances of the Zilber-Pink conjecture. We use the same strategy and give here a sketch of the proof.
The elliptic curve $E_\lambda$ is analytically isomorphic to $\C / \Lambda_\tau$, where $\Lambda_\tau=\Z + \tau \Z$, for some $\tau$ in the complex upper-half plane. 
Let $\cC'$ be the subset of $\cC$ we want to prove to be finite. Fix $\bo{c}_0$ in $\cC'$, let $D_0$ be the discriminant of the endomorphism ring of $E_{\lambda(\bo{c}_0)}$ and let $a_1, \dots , a_n$ be the coefficients of a non-trivial relation between the $P_i(\bo{c}_0)$. By the theory of complex multiplication we have that the corresponding $\tau_0$ is imaginary quadratic and has height $\ll |D_0|$, while using works of Masser and David we can suppose that the $a_i$ have height bounded by a positive power of $|D_0|$, up to a constant. Moreover, all conjugates of $\bo{c}_0$ are in $\cC'$ with the same CM discriminant and coefficients of the relation between the $P_i$. Again, the theory of complex multiplication tells us that there are at least $|D_0|^{1/3}$ such conjugates.
We consider the elliptic logarithms $z_1, \dots, z_n$ of $P_1, \dots ,P_n$ and the uniformization map $(\tau, z_1, \dots , z_n) \mapsto (\lambda, P_1, \dots ,P_n)$.  This map, restricted to a suitably chosen fundamental domain, is definable in the o-minimal structure $\R_{\text{an,exp}}$ by a work of Peterzil and Starchenko. The preimage of $\cC$ via this map is then a definable surface. Our point $\bo{c}_0$ and all its conjugates will correspond to points on this surface lying in a linear variety defined by equations whose coefficients are related to $\tau_0$ and the $a_i$ and so are forced to be quadratic numbers of height $\ll |D_0|^\gamma$, for some positive $\gamma$. A Theorem of Habegger and Pila implies that there are at most $\ll_\epsilon |D_0|^{\gamma\epsilon}$ points of that kind on the surface, provided the functions $z_1,\dots ,z_n$ are  algebraically independent over $\C (\tau)$. This is ensured by a result of Bertrand. Finally, recalling that we have $\geq |D_0|^{1/3}$ of such points coming from conjugating $\bo{c}_0$,if we choose a small enough $\epsilon$ we have a bound on $|D_0|$ and the claim of the Theorem.

Let us see an example. Let 
$$
 P_1=\left( 2, \sqrt{2(2-\lambda)} \right), \quad P_2=\left( 3, \sqrt{6(3-\lambda)} \right).
$$
These are generically independent points on $E_\lambda$. Indeed, they are defined over disjoint quadratic extensions of $\Qbar (\lambda)$ and therefore if they were dependent, by conjugating, we see that they would be identically torsion. This is not the case. For instance, $P_1(6)$ has infinite order on $E_6$. In \cite{MasserZannier08}, Masser and Zannier proved that there are at most finitely many values $\lambda_0$ such that $P_1(\lambda_0)$ and $P_2(\lambda_0)$ are both torsion. Our theorem implies that there are at most finitely many $\lambda_0$ such that $E_{\lambda_0}$ has complex multiplication and the points $P_1(\lambda_0)$ and $P_2(\lambda_0)$ are dependent over End$(E_{\lambda_0})$. 

As mentioned above, our Theorem \ref{mainthm} is a special case of the so-called Zilber-Pink Conjectures on Unlikely Intersections. In particular, combined with results in \cite{linrel}, \cite{Viada2008} and \cite{galateau2010}, it settles the conjecture for a curve in a fibered power of an elliptic scheme, when everything is defined over $\Qbar$. For an account on these conjectures see \cite{pink}, \cite{Zannier} or \cite{HabPila14}.

Let $\mc{E}\rightarrow S$ be a non-isotrivial elliptic scheme over an irreducible, smooth, quasi-projective curve $S$, both defined over $\Qbar$. Moreover, let $\mc{A}\rightarrow S$ be its $n$-fold fibered power. An irreducible subvariety of $\mc{A}$ is called special if it is an irreducible component of an algebraic subgroup of a CM fiber or an irreducible component of a flat subgroup scheme of $\mc{A}$. We will define flat subgroup schemes in the next section, where we will also see how Theorem \ref{mainthm} implies the following statement.

\begin{theorem}\label{ZP}
Let $\mathcal{A}$ be as above and let $\cC$ be an irreducible curve in $\mathcal{A}$ defined over $\overline{\mathbb{Q}}$ and not contained in a proper special subvariety of $\mathcal{A}$. Then there are at most finitely many points in $\mathcal{C}$ that are contained in special subvarieties of $\mathcal{A}$ of codimension at least 2.
\end{theorem}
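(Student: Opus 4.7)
The plan is to reduce the general non-isotrivial setting of Theorem \ref{ZP} to the Legendre family $E_\lambda\to \mathbb{A}^1\setminus\{0,1\}$ and then dichotomize the codimension-$\geq 2$ special subvarieties of $\cA$ containing a point of $\cC$ into exactly two types, handled respectively by Theorem \ref{mainthm} and by the main theorem of \cite{linrel}. For the reduction, since $\cE\to S$ is non-isotrivial, the $j$-invariant gives a non-constant morphism $S\to\mathbb{A}^1$. Passing to a suitable finite cover $S'\to S$ one obtains a finite morphism $S'\to \mathbb{A}^1\setminus\{0,1\}$ along which $\cE\times_S S'$ becomes isogenous to the pullback of the Legendre family. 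Taking $n$-fold fibered powers and pulling $\cC$ back along this cover yields finitely many irreducible curves in a Legendre-type $E_\lambda^n$; the generic $\Z$-independence of the $P_i$ and the non-containment in a fiber propagate to each component after the isogeny. The results of Viada \cite{Viada2008} and Galateau \cite{galateau2010} will be used to dispose of components that might, after the isogeny, end up inside torsion translates of flat subgroup schemes, and to absorb the contributions of the isogeny kernels not picked up by Theorem \ref{mainthm} or \cite{linrel}.

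After this reduction I may assume $\cA=E_\lambda^n$ and proceed with the case split. Let $\bo{c}\in \cC$ lie in a special subvariety $V\subseteq\cA$ of codimension at least two. By the definition recalled in the statement, $V$ is either (i) an irreducible component of an algebraic subgroup of a CM fiber $E_{\lambda_0}^n$, or (ii) an irreducible component of a flat subgroup scheme of $\cA$. In case (i) the fiber itself already contributes one unit of codimension in $\cA$, so the subgroup has codimension $\geq 1$ inside $E_{\lambda_0}^n$ and is defined by a non-trivial linear relation with coefficients in $\text{End}(E_{\lambda_0})$; specialising at $\bo{c}$ yields $(a_1,\dots,a_n)\in\text{End}(E_{\lambda_0})^n\setminus\{0\}$ with $\sum_i a_iP_i(\bo{c})=O$ and $E_{\lambda_0}$ having complex multiplication, so Theorem \ref{mainthm} directly gives finiteness. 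In case (ii), a flat subgroup scheme of $\cA$ of codimension $\geq 2$ is cut out by at least two independent $\Z$-linear relations among $P_1,\dots,P_n$, which specialise at $\bo{c}$ to two independent integer relations among $P_1(\bo{c}),\dots,P_n(\bo{c})$; finiteness of the set of such $\bo{c}$ is exactly the content of the main theorem of \cite{linrel}. The desired finite set is the union of the two.

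The main obstacle will be the reduction step: one must follow $P_1,\dots,P_n$ through the finite cover and the isogeny and verify, component by component, that the generic independence hypothesis and the non-containment of $\cC$ in a fiber are both preserved, and that the notion of special subvariety pulls back in the expected way. Isogenies can introduce torsion sections and auxiliary algebraic subgroups, and it is precisely to control these degenerations that \cite{Viada2008} and \cite{galateau2010} are needed. Once this reduction is cleanly performed, the dichotomy above together with Theorem \ref{mainthm} and \cite{linrel} closes the argument with no further input.
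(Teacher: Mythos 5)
Your overall two-case dichotomy (CM fibers handled by Theorem~\ref{mainthm}, flat subgroup schemes of codimension $\geq 2$ handled by \cite{linrel}) and the reduction to the Legendre family are indeed the skeleton of the paper's argument, but there is a genuine gap: you never handle the case in which $\cC$ lies inside a single non-CM fiber of $\cA\to S$. Re-read the definition of special subvariety: a non-CM fiber $\cA_s$ is neither an irreducible component of an algebraic subgroup of a \emph{CM} fiber nor a flat subgroup scheme (it does not dominate $S$), so it is \emph{not} a special subvariety. Consequently, the hypothesis ``$\cC$ is not contained in a proper special subvariety'' does \emph{not} rule out $\cC\subseteq\cA_s$ for $s$ non-CM. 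In that situation both Theorem~\ref{mainthm} and the main result of \cite{linrel} are unavailable (each requires $\cC$ not contained in a fixed fiber), your reduction to a Legendre curve with dominant $\lambda$-coordinate cannot even get off the ground, and your argument simply has nothing to say. This is precisely what the paper uses Viada \cite{Viada2008} and Galateau \cite{galateau2010} for: if $\cC$ is a curve inside a single abelian variety $\cA_s$ with $s$ non-CM, their theorems on unlikely intersections in a fixed abelian variety directly give finiteness of the points of $\cC$ lying on algebraic subgroups of codimension $\geq 2$ (using Lemma~\ref{lemrel} to see that flat subgroup schemes specialise to algebraic subgroups of the correct codimension). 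You instead invoke Viada and Galateau to ``absorb the contributions of the isogeny kernels,'' which is not a role they play and which they cannot obviously play; that step of your outline is not substantiated.

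A secondary issue is the reduction itself. You pass to $S'\to S$ and assert that $\cE\times_S S'$ becomes \emph{isogenous} to the pullback of the Legendre family, and then try to repair the damage an isogeny causes (torsion sections, auxiliary subgroups) using Viada--Galateau. The paper sidesteps all of this: Lemma~\ref{lemdiag} (from \cite{Hab}) furnishes a diagram $\cA\xleftarrow{f}\cA'\xrightarrow{e}\cA_L$ in which $f$ and $e$ restrict to \emph{isomorphisms} of abelian varieties on every fiber, and Lemma~\ref{lemhab} tracks flat subgroup schemes and the non-degeneracy of $\cC$ through $f$ and $e$ cleanly. With that in hand, nothing needs to be ``absorbed'' by Viada--Galateau at the reduction stage; they are used only for the in-a-fixed-non-CM-fiber case described above. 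So the fix for your proposal is two-fold: (a) separate out and treat the case $\cC\subseteq\cA_s$, $s$ non-CM, using Viada and Galateau for the correct purpose; (b) replace the isogeny-based reduction with the fiber-wise-isomorphism diagram of Lemma~\ref{lemdiag}, so that special subvarieties and generic independence transfer without extra input.
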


Bertrand in \cite{BertPoinc} asked the following question, as one of the ingredients needed for proving the Zilber-Pink Conjecture for curves in Poincar\'e biextensions of elliptic schemes.

\begin{question}[\cite{BertPoinc}, Question 1]
Let $\mc{E}$ be a non-isotrivial elliptic scheme over a curve $S/\Qbar$, and let $p$, $q$ be two sections of $E/S$ defined over $\Qbar$. Assume that there are infinitely many points $\overline{\lambda} \in S(\Qbar)$ such that the fiber $\mc{E}_{\overline{\lambda}}$ of $\mc{E}/S$ above $\overline{\lambda}$ admits complex multiplication and such that the points $p(\overline{\lambda})$ and $q(\overline{\lambda})$ are linearly dependent over End$\left(\mc{E}_{\overline{\lambda}}\right)$. Must the section $p$ and $q$ then be linearly dependent over $\Z$?
\end{question}

The example above is clearly an instance of such problem. In the next section we will see how Theorem \ref{ZP} implies a positive answer to Bertrand's question.

In this paper we use the $\ll$ notation: we say $a\ll b$ for non-negative real numbers $a$ and $b$ if there exists a positive $c$ such that $a\leq cb$. The constant $c$ will usually depend on $\cC$. Any further dependence will be specified with an index.

\section{Proof of Theorem \ref{ZP}}

Recall that we have a non-isotrivial elliptic scheme $\mc{E}\rightarrow S$ over an irreducible, smooth, quasi-projective curve $S$, both defined over $\Qbar$. Non-isotrivial means that $\mc{E}$ cannot become a constant family after a finite  base change. We have its $n$-fold fibered power $\mc{A}$ and an irreducible curve $\cC$ defined over $\Qbar$ and not contained in a flat subgroup scheme or in a fixed fiber. We call $\varphi$ the structural morphism $\mc{A}\rightarrow S$.

The following definitions and results are borrowed from a work of Habegger \cite{Hab}.

First, a subgroup scheme $G$ of $\mc{A}$ is a closed, possibly reducible, subvariety of $\mc{A}$ which contains the image of $G\times_S G$ under the addition morphism and the image of the zero section, and is mapped to itself by the inversion morphism. A subgroup scheme $G$ is called flat if $\varphi_G:G\rightarrow S$ is flat, i.e., all irreducible components of $G$ dominate the base curve $S$ (see \cite{Hart}, Chapter III, Proposition 9.7).

For every $\bo{a}=(a_1, \dots , a_n)\in \Z^n $ we have a morphism $\bo{a}:\cA \rightarrow \cE$ defined by
$$
\bo{a}(P_1,\dots, P_n)=a_1 P_1 +\dots +a_n P_n.
$$
We identify the elements of $\Z^n$ with the morphisms they define. The fibered product $\alpha =\bo{a}_1 \times_S \dots \times_S \bo{a}_r$, for $\bo{a}_1 , \dots , \bo{a}_r \in \Z^n$ defines a morphism $\cA \rightarrow \mathcal{B}$ over $S$ where $\mathcal{B}$ is the $r$-fold fibered power of $\cE$. The kernel of $\alpha$, $\text{ker} \, \alpha$ indicates the fibered product of $\alpha: \cA \rightarrow \mathcal{B}$ with the zero section $S\rightarrow \mathcal{B}$. We consider it as a closed subscheme of $\cA$.

\begin{lemma}\label{lemrel}
Let $G$ be a  flat subgroup scheme of $\cA$ of codimension $\geq r$, with $1\leq r\leq n$. Then, there exist independent $\bo{a}_1,\dots , \bo{a}_r \in \Z^n$ such that $G\subseteq \textnormal{ker}(\bo{a}_1 \times_S \dots \times_S \bo{a}_r)$ and $\textnormal{ker}(\bo{a}_1 \times_S\dots \times_S \bo{a}_r)$ is a flat subgroup scheme of $\cA$ of codimension $r$. Moreover, for every $s\in S(\C)$ we have $\dim G_s=\dim G-1$.
Finally, the point $(P_1,\dots, P_n)\in \mc{A}_s$ is contained in a proper algebraic subgroup of $\mc{A}_s$ if and only if there exists $(a_1, \dots , a_n)\in \text{\emph{End}}(\mc{E}_s)^n\setminus\{0\}$  with $a_1P_1 +\dots +a_n P_n=0$.
\end{lemma}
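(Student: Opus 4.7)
The plan is to dispatch the three assertions in order, with the first carrying essentially all the content.

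For the first assertion, the natural strategy is to pass to the generic fiber and use non-isotriviality. Let $\eta$ denote the generic point of $S$; flatness of $G$ over $S$ ensures that its generic fiber $G_\eta$ is a closed subgroup scheme of $\cA_\eta=\cE_\eta^n$ of codimension $\geq r$. Non-isotriviality forces the $j$-invariant of $\cE_\eta$ to be transcendental over $\Qbar$, so $\cE_\eta$ has no complex multiplication and $\textrm{End}(\cE_\eta)=\Z$. By Poincar\'e complete reducibility together with $\textrm{End}(\cE_\eta^n)\cong M_n(\Z)$, every abelian subvariety of $\cE_\eta^n$ of codimension $s$ is the identity component of some kernel $\ker(\bo{a}_1\times\dots\times\bo{a}_s)$ with $\Z$-independent $\bo{a}_i\in\Z^n$. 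Applying this to $G_\eta^0$ and then multiplying the resulting $\bo{a}_i$ by the exponent of the finite component group $G_\eta/G_\eta^0$ (so as to absorb the torsion cosets without losing $\Z$-independence), and finally discarding superfluous $\bo{a}_i$, yields $\Z$-independent $\bo{a}_1,\dots,\bo{a}_r$ with $G_\eta\subseteq \ker(\bo{a}_1\times\dots\times\bo{a}_r)$. Since $G$ is flat and the generic fiber is schematically dense, this inclusion spreads out to $G\subseteq\ker(\bo{a}_1\times_S\dots\times_S\bo{a}_r)$. The kernel on the right is a closed subgroup scheme of $\cA$ whose every geometric fiber is the kernel of the surjective morphism $\cE_s^n\to\cE_s^r$ defined by the same $\bo{a}_i$ (surjectivity holding because $\Z$-independence persists under the inclusion $\Z\hookrightarrow\textrm{End}(\cE_s)$); hence each fiber has pure dimension $n-r$, so the kernel is equidimensional over $S$, which gives both flatness and codimension exactly $r$.

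The second assertion is then immediate from flatness of $\varphi_G\colon G\to S$: a finite type flat morphism to a smooth curve has fibers of pure dimension $\dim G-\dim S=\dim G-1$.

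For the third assertion, the backward direction is trivial, as any non-trivial $\textrm{End}(\cE_s)$-relation between the coordinates places $(P_1,\dots,P_n)$ in the proper kernel of the associated morphism $\cE_s^n\to\cE_s$. For the forward direction, I would again invoke Poincar\'e complete reducibility, this time for $\cE_s^n$: if $(P_1,\dots,P_n)$ lies in a proper algebraic subgroup $H$, then $H^0$ is a proper abelian subvariety, hence the identity component of the kernel of some non-zero homomorphism $(a_1,\dots,a_n)\colon\cE_s^n\to\cE_s$ with $a_i\in\textrm{End}(\cE_s)$; multiplying by the exponent of the finite component group $H/H^0$ produces a non-zero tuple for which the resulting relation vanishes on all of $H$, in particular on $(P_1,\dots,P_n)$.

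The main obstacle will be the first assertion, and within it the step of propagating the inclusion from the generic fiber to all of $S$ while simultaneously ensuring that the chosen kernel is exactly of codimension $r$ and flat over $S$. These rely crucially on $\Z$-independence of the $\bo{a}_i$ and on the uniform embedding $\Z\hookrightarrow\textrm{End}(\cE_s)$ at every fiber, both of which ultimately come from non-isotriviality of $\cE$.
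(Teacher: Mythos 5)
You take a genuinely different route from the paper: the paper's entire proof of this lemma is a citation to Lemma 2.5 of Habegger together with the remark that the last claim is classical, whereas you supply a direct, self-contained argument via Poincar\'e reducibility on the generic fiber, absorption of the finite component group, and spreading out. Most of it is correct: non-isotriviality forcing $\textrm{End}(\cE_\eta)=\Z$; writing $G_\eta^0$ as the identity component of a kernel, discarding superfluous rows and scaling by the exponent of $G_\eta/G_\eta^0$; propagating the inclusion from the generic fiber by schematic density of $G_\eta$ in the flat $G$; the dimension formula for flat morphisms in the second assertion; and the classical argument for the third.

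There is, however, a gap in the step establishing that $K=\ker(\bo{a}_1\times_S\cdots\times_S\bo{a}_r)$ is flat. You infer flatness from the fact that every fiber $K_s$ has pure dimension $n-r$, but equidimensionality of fibers does not by itself imply that every irreducible component of $K$ dominates $S$: the closed subscheme $V(xy,\,ty)\subseteq\mathbb{A}^2\times\mathbb{A}^1_t$ has every fiber over $\mathbb{A}^1_t$ purely one-dimensional, yet contains the component $V(x,t)$ sitting entirely inside the fiber over $t=0$. An extra input is required. One clean repair: each $\ker\bo{a}_i$ is the preimage under $\bo{a}_i$ of the zero section of $\cE\to S$, which is an effective Cartier divisor because $\cE$ is regular, so $K$ is locally cut out by $r$ equations inside the smooth $(n+1)$-dimensional $\cA$; by Krull's principal ideal theorem every irreducible component of $K$ has dimension at least $n+1-r$, which strictly exceeds $\dim K_s=n-r$, so no component can lie inside a single fiber and $K$ is flat in the sense used here (and of codimension exactly $r$). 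Alternatively, apply miracle flatness to the morphism $\bo{a}_1\times_S\cdots\times_S\bo{a}_r:\cA\to\mathcal{B}$ between regular schemes, whose nonempty fibers are cosets of $\ker\alpha_s$ and hence all of dimension $n-r$, and then base-change along the zero section $S\to\mathcal{B}$. Either route closes the gap; as written, the inference from equidimensional fibers to flatness is not valid.
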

\begin{proof}
This follows from Lemma 2.5 of \cite{Hab} and its proof. The last claim is classical.
\end{proof}

By this lemma it is then clear that Theorem \ref{ZP} implies a positive answer to Bertrand question. Indeed, the two sections $p$ and $q$ correspond to a curve in the fibered square of a non-isotrivial elliptic scheme. If this curve intersects the union of special subvarieties of codimension at least 2 in infinitely many points then it must be contained in a proper special subvariety which can only be a flat subgroup scheme because the curve is not contained in a fixed fiber. Therefore, the two sections $p$ and $q$ are dependent over $\Z$.

Consider now the Legendre family with equation \eqref{defleg}. This gives an example of an elliptic scheme, which we call $\cE_L$, over the modular curve $Y(2)=\mathbb{P}^1\setminus \{0,1,\infty\}$. We write $\cA_L$ for the $n$-fold fibered power of $\cE_L$.

\begin{lemma}[\cite{Hab}, Lemma 5.4] \label{lemdiag}
Let $\cA$ be as above. After possibly replacing $S$ by a Zariski open non-empty subset there exists an irreducible non-singular quasi-projective curve $S'$ defined over $\Qbar$ such that the following is a commutative diagram
$$
\begin{CD}
\cA @<{f}<< \cA' @>\text{e}>>  \cA_L \\
@VVV @VVV @VVV\\
S @<<{l}<  S' @>>{\lambda}> Y(2)
\end{CD}
$$
where $l$ is finite, $\lambda$ is quasi-finite, $\cA'$ is the abelian scheme $\cA\times_S S'$, $f$ is finite and flat and $e$ is quasi-finite and flat. Moreover, the restriction of $f$ and $e$ to any fiber of $\cA'\rightarrow S'$ is an isomorphism of abelian varieties.
\end{lemma}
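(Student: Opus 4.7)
The plan is to realize the diagram via a level-two structure argument. Since $\cE\to S$ is non-isotrivial, the $j$-invariant defines a non-constant morphism $j_{\cE}\colon S\to \mathbb{A}^1_j$; on the other side, the Legendre $j$-map $j_L\colon Y(2)\to \mathbb{A}^1_j$ given by $\lambda\mapsto 256(\lambda^2-\lambda+1)^3/(\lambda(\lambda-1))^2$ is finite surjective of degree six, corresponding to the action of $S_3$ on orderings of the two-torsion. First I would shrink $S$ to a Zariski open that avoids the finitely many points where $j_{\cE}\in\{0,1728,\infty\}$ and where the $2$-torsion subscheme $\cE[2]\to S$ fails to be finite \'etale.

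I would then construct $S'$ by trivializing and ordering the nontrivial $2$-torsion. The punctured group scheme $\cE[2]\setminus\{O\}\to S$ is finite \'etale of degree three, and a further degree-two cover orders a pair of sections; composing gives a finite morphism $l\colon S'\to S$ of degree at most six. On the pullback $\cE':=\cE\times_S S'$, the two distinguished $2$-torsion sections provide a canonical Legendre model, which both produces the morphism $\lambda\colon S'\to Y(2)$ and an $S'$-isomorphism $\cE'\cong \cE_L\times_{Y(2)} S'$. Taking the $n$-fold fibered power and composing with the projection onto $\cA_L$ gives $e$; the projection $\cA'\to\cA$ gives $f$. By construction both $f$ and $e$ restrict to isomorphisms on every fiber.

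The finiteness and flatness claims then follow: $l$ is finite by construction and $\lambda$ is quasi-finite because $j_L\circ\lambda = j_{\cE}\circ l$ is non-constant while $j_L$ is finite. The morphism $f$ is finite and flat as the base change of $l$ by $\cA\to S$; $e$ factors as the $S'$-isomorphism $\cA'\cong\cA_L\times_{Y(2)}S'$ followed by the base change of $\lambda$, hence is quasi-finite and flat.

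The main obstacle is keeping $l$ genuinely finite (rather than only quasi-finite) after all the shrinking: one must check that a single Zariski open of $S$ can simultaneously avoid the bad $j$-values, the points where $\cE[2]$ is ramified, and the branch locus of the ordering cover, while remaining nonempty. Non-isotriviality guarantees these bad loci consist of finitely many points, so such an open exists. Once that is arranged, the isomorphism with the Legendre pullback and the commutativity of the diagram are forced by the moduli interpretation of $Y(2)$ as parametrizing elliptic curves with full level-two structure, and one simply invokes Habegger's Lemma~5.4 in \cite{Hab} for the details.
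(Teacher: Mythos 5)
The paper gives no proof of this lemma; it simply cites \cite{Hab}, Lemma~5.4. Your sketch reconstructs the standard level-two-structure argument, which is indeed what underlies Habegger's proof, so in substance you are taking the same route that the paper defers to. The skeleton is sound: after discarding the finitely many bad fibers, $\cE[2]\setminus\{O\}\to S$ is finite \'etale of degree three, a further degree-two cover orders the remaining nontrivial $2$-torsion, the two tautological sections on the pullback $\cE'$ force a Legendre model and hence the map $\lambda\colon S'\to Y(2)$ with $\cE'\cong \cE_L\times_{Y(2)}S'$, and the assertions about $f$, $e$, $l$, $\lambda$ follow by base change together with the non-constancy of $j_{\cE}$.

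Two small points you should tighten. First, the tower you build ($\cE[2]\setminus\{O\}$ followed by the degree-two ordering cover) need not itself be irreducible or nonsingular; one has to pass to an irreducible component and then to its normalization, shrinking $S$ once more so that the resulting map $l$ is still finite (not merely quasi-finite). Since the lemma explicitly asks for $S'$ irreducible, nonsingular, and quasi-projective over $\Qbar$, this step is worth stating rather than leaving implicit. Second, your deduction that $\lambda$ is quasi-finite should be phrased slightly more carefully: you know $j_L\circ\lambda = j_{\cE}\circ l$ is non-constant, so $\lambda$ is non-constant, and a non-constant morphism between curves is automatically quasi-finite; you do not need to appeal to finiteness of $j_L$. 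With these adjustments your argument is a faithful expansion of the citation.
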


We will also need the following technical lemma.

\begin{lemma} \label{lemhab}
If $G$ is a flat subgroup scheme of $\cA$ then $e\left(f^{-1}(G)\right)$ is a flat subgroup scheme of $\cA_L$ of the same dimension.
Moreover, let $X$ be a subvariety of $\cA$ dominating $S$ and not contained in a proper flat subgroup scheme of $\cA$, $X''$ an irreducible component of $f^{-1}(X)$ and $X'$ the Zariski closure of $e(X'')$ in $\cA_L$. Then $X'$ has the same dimension of $X$, dominates $Y(2)$ and is not contained in a proper flat subgroup scheme of $\cA_L$.
\end{lemma}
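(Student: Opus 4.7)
The plan is to handle the two assertions in order, using Lemma~\ref{lemrel} as the main tool to describe flat subgroup schemes in terms of kernels of integer-linear relations, which transfer naturally across the diagram of Lemma~\ref{lemdiag}.

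For the first assertion, since $\cA' = \cA \times_S S'$ we have $f^{-1}(G) = G \times_S S'$, a flat subgroup scheme of $\cA'$ of the same dimension as $G$ (as $l$ is finite between irreducible curves). Quasi-finiteness of $e$ then gives $\dim e(f^{-1}(G)) = \dim G$. To see that the Zariski closure of $e(f^{-1}(G))$ is a subgroup scheme of $\cA_L$ flat over $Y(2)$, I would invoke Lemma~\ref{lemrel} applied to $G$: there exist independent $\bo a_1, \ldots, \bo a_r \in \Z^n$ with $G \subseteq \ker(\bo a_1 \times_S \cdots \times_S \bo a_r)$, the kernel having the same codimension as $G$. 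These integer-linear relations are intrinsic and transfer cleanly across the diagram: the pullback $f^{-1}$ of this kernel is the analogous kernel in $\cA'$, and $e$, being a fibrewise isomorphism of abelian varieties, maps it into the analogous kernel in $\cA_L$, yielding a flat subgroup scheme of the expected dimension whose flatness over $Y(2)$ follows from the dominance of $\lambda$ together with flatness of $G$ over $S$.

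For the second assertion, I would first verify the dimension and dominance statements. Finiteness of $f$ gives $\dim X'' = \dim X$, and flatness of $f$ forces every irreducible component of $f^{-1}(X)$ to have dimension $\dim X$; since the fibres of $X \to S$ have dimension $\dim X - 1$, the component $X''$ must dominate $S'$. Quasi-finiteness of $e$ then gives $\dim X' = \dim X$, and the dominance of $\lambda$ (a non-constant morphism of irreducible curves) forces $X'$ to dominate $Y(2)$. For the non-containment claim, suppose for contradiction that $X' \subseteq H$ for some proper flat subgroup scheme $H \subseteq \cA_L$. Then $X'' \subseteq e^{-1}(H)$, which is a flat subgroup scheme of $\cA'$ of codimension $\geq 1$. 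By Lemma~\ref{lemrel} applied inside $\cA'$ there exist independent $\bo a_1, \ldots, \bo a_r \in \Z^n$ with $r \geq 1$ such that $e^{-1}(H) \subseteq \ker(\bo a_1 \times_{S'} \cdots \times_{S'} \bo a_r)$. The crucial point is that this kernel in $\cA'$ equals $f^{-1}$ of the kernel in $\cA$ defined by the same integer vectors, because $\cA' = \cA \times_S S'$ and the defining integer-linear relations commute with base change. Noting that $f(X'') = X$ (since $X''$ is an irreducible component of $f^{-1}(X)$ of the same dimension as $X$ and $f$ is finite), we conclude $X \subseteq \ker_{\cA}(\bo a_1 \times_S \cdots \times_S \bo a_r)$, a proper flat subgroup scheme of $\cA$, contradicting the hypothesis on $X$.

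The main obstacle I anticipate is that $e$ is not globally a morphism of group schemes but only a fibrewise isomorphism of abelian varieties over the possibly non-injective $\lambda\colon S' \to Y(2)$, so a naive pushforward of a subgroup scheme along $e$ could produce, on each fibre of $\cA_L$, a union of distinct subgroups rather than a single subgroup. The strategy above sidesteps this by routing everything through the integer-matrix description of flat subgroup schemes provided by Lemma~\ref{lemrel}: the correspondence among $\cA$, $\cA'$, and $\cA_L$ is then governed by the integer vectors $\bo a_i$, which are intrinsic and independent of the choice of sheet of $\lambda$ or $l$.
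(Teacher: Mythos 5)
The paper dispatches this lemma with a one-line citation to the proof of Habegger's Lemma 5.5, so your attempt to reconstruct the argument deserves scrutiny on its own merits. Your proof of the second assertion is essentially correct and is the one that actually matters for the application in the proof of Theorem~\ref{ZP}: dimension and dominance follow from finiteness and flatness of $f$ plus quasi-finiteness of $e$ and non-constancy of $\lambda$, and the non-containment transfers via Lemma~\ref{lemrel} together with the base-change identity $\ker_{\cA'}(\boldsymbol{a}_1\times_{S'}\cdots\times_{S'}\boldsymbol{a}_r)=f^{-1}\bigl(\ker_{\cA}(\boldsymbol{a}_1\times_S\cdots\times_S\boldsymbol{a}_r)\bigr)$ and the fact that $f(X'')=X$.

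Your argument for the first assertion, however, has a genuine gap. Lemma~\ref{lemrel} only yields a containment $G\subseteq\ker(\boldsymbol{a}_1\times_S\cdots\times_S\boldsymbol{a}_r)$ of equal codimension, \emph{not} an equality: that kernel generically has several irreducible components (torsion translates of its identity component), and $G$ may be a proper union of them. So what you have actually shown is that $e\bigl(f^{-1}(G)\bigr)$ sits inside a flat subgroup scheme of $\cA_L$ of the right codimension, not that it \emph{is} one. This is precisely the danger you flag yourself in the final paragraph: two sheets of $\lambda$ lying over the same point of $Y(2)$ could carry the torsion components of $G$ to \emph{different} torsion translates in the same fibre of $\cA_L$, and the resulting union of components of $\ker_{\cA_L}(\boldsymbol{a}_1\times\cdots\times\boldsymbol{a}_r)$ need not be closed under the group law. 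Routing everything through the integer vectors does not sidestep this, because those vectors pin down only the ambient kernel, not $G$. To close the gap you would in addition have to control the finite (torsion) part of $G$ and show it transfers coherently across the diagram; that additional bookkeeping is precisely what Habegger's Lemma 5.5 proof carries out and what your write-up omits.
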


\begin{proof}
This follows from the proof of Lemma 5.5 of \cite{Hab}.
\end{proof}

We can now see how Theorem \ref{ZP} follows from our Theorem \ref{mainthm} in combination with works of Viada, Galateau, and a previous work of the author with Capuano. 

First, by hypothesis $\cC$ is not contained in a fixed CM fiber. Moreover, the claim of the theorem follows from works of Viada \cite{Viada2008} and Galateau \cite{galateau2010} if $\cC$ is contained in a fixed non-CM fiber because, by Lemma \ref{lemrel}, flat subgroup schemes specialize to algebraic subgroups of the same codimension. Therefore, we can suppose $\cC$ is not contained in a fixed fiber.

Now, Theorem 2.1 of \cite{linrel} implies that $\cC$ intersects the union of flat subgroup schemes of codimension at least 2 in finitely many points. We then only have to prove that the intersection of $\cC$ with the union of all proper algebraic subgroups of CM fibers is finite.

Suppose this is not the case and consider the diagram in Lemma \ref{lemdiag}. Then, using Lemma \ref{lemdiag} and \ref{lemhab} one can see that the Zariski closure $\cC'$ of the image via $e$ of some irreducible component of $f^{-1}(\cC)$ would be a curve in $\cA_L$, which is not contained in a flat subgroup scheme or in a fixed fiber. Since the restriction of $f$ and $e$ to any fiber of $\cA'\rightarrow S'$ is an isomorphism and $l$ is a finite map, we have that $\mc{C}'$ would contain infinitely many point lying in proper algebraic subgroups of CM fibers. Therefore, we are reduced to proving the claim for the Legendre family. This follows from Theorem \ref{mainthm} by the last claim of Lemma \ref{lemrel}.

\section{Preliminaries}

In this section we introduce some notations and collect results needed later.

\subsection{Heights}
 
By $h$ we will indicate the logarithmic absolute Weil height on the projective space $\mathbb{P}^N$, as defined in \cite{BombGub}, p.~16, while $\widehat{h}$ denotes the N\'eron-Tate or canonical height defined for the algebraic points of an elliptic curve defined over the algebraic numbers. For this see \cite{Silv09}, VIII.9.

Now, if $\alpha$ is an algebraic number, we set $h(\alpha)=h([1,\alpha])$ and define its multiplicative height as $H(\alpha)=\exp(h(\alpha))$.

We need a further definition of height.
Set $\min \emptyset = + \infty$. The $d$-height of a complex number $\alpha$, for some integer $d\geq 1$, is defined as
\begin{multline*}
H_d(\alpha)= \min \{ \max \{|c_0|,\dots, |c_d|\}, c_0, \dots, c_d \in \Z \mbox{ coprime,} \\ \mbox{not all zero and } c_0\alpha^d+\dots+c_d=0 \}.
\end{multline*}
For an $N$-tuple $(\alpha_1, \dots , \alpha_N) $, we set $H_d(\alpha_1, \dots , \alpha_N)= \max \{ H_d(\alpha_j) \}$.
Note that an $N$-tuple $(\alpha_1, \dots , \alpha_N) $ has finite $d$-height if and only if all the entries are algebraic numbers of degree at most $d$ over $\Q$. Moreover, if $\alpha \in \Q$ then $H_d(\alpha)=H(\alpha)$ for all $d$.

By Lemma 1.6.7 of \cite{BombGub} one can see that, if $\alpha$ is an algebraic number of degree at most $d$, then
\begin{equation} \label{heightequiv}
H_d(\alpha)\leq 2^d H(\alpha)^d.
\end{equation}

Let $\alpha$ be an imaginary quadratic number with minimal polynomial $aX^2+bX+c\in \Z[X]$. Then, we have
\begin{equation}\label{absv}
|\alpha|=\left|\frac{-b\pm \sqrt{b^2-4ac}}{2a} \right|\ll H_2(\alpha),
\end{equation}
\begin{equation}\label{H2re}
H_2(\text{Re}(\alpha)) \leq \max \{ |b|,|2a| \} \leq 2 H_2(\alpha),
\end{equation}
and 
\begin{equation}\label{H2im}
H_2(\text{Im}(\alpha))\leq \max \{ |b^2-4ac|,|4a^2| \} \ll  H_2(\alpha)^2.
\end{equation}

We call $A$ the quasi-projective variety in $Y(2)\times (\mathbb{P}^2)^n $ with coordinates $$(\lambda,[X_1,Y_1, Z_1],\dots ,[X_n,Y_n, Z_n ])$$ and defined by the $n$ equations
$$
Y_i^2Z_i=X_i(X_i-Z_i)(X_i-\lambda Z_i),
$$
for $i=1, \dots ,n$. We set $P_i=[X_i,Y_i,Z_i]$ and we have a curve $\cC\subseteq A$ defined over a number field $k$ such that $\lambda$ is non-constant. Then, if $\bo{c}_0$ is an algebraic point of $\cC$, using standard properties of heights we have that 
\begin{equation} \label{ineqheights}
h(P_i(\bo{c}_0)) \ll h(\lambda (\bo{c}_0))+1,
\end{equation}
for all $i=1, \dots, n$. Moreover, we have
\begin{equation}\label{ineqdeg}
[k(\bo{c}_0):k]\ll [k(\lambda(\bo{c}_0)):k].
\end{equation}

\subsection{Uniformisation}

We want to define a uniformisation map for $A$. For more details we refer to Chapter VII of \cite{Fordautom}.

It is well known that an elliptic curve over the complex numbers is analytically isomorphic to a complex torus $\C / \Lambda_{\tau}$ where $\tau$ is an element of the complex upper-half plane $\mathbb{H}$ and $\Lambda_\tau$ is the lattice generated by 1 and $\tau$. Moreover, let 
$$
\mc{L}_\tau =\lg z \in \C: z=x+\tau y , \mbox{ for }x, y \in [0,1) \rg,
$$
be a fundamental domain for such lattice.

The well-known Weierstrass $\wp$-function $\wp(z,1,\tau)=\wp(z,\tau)$ is a $\Lambda_{\tau}$-periodic function satisfying a differential equation of the form
\begin{equation}\label{weieq}
(\wp(z,\tau)')^2= 4 \wp(z,\tau)^3-g_2(\tau) \wp(z,\tau) - g_3(\tau),
\end{equation}
where $\wp(z,\tau)'=d\wp(z,\tau)/dz$.
Consider the values of the $\wp $-function at the half periods
$$
e_1(\tau)=\wp\left( \frac{1}{2}, \tau\right), \quad  e_2(\tau)=\wp\left( \frac{1+\tau}{2}, \tau\right), \quad e_3(\tau)=\wp\left( \frac{\tau}{2}, \tau\right).
$$
These are the zeroes of the cubic polynomial on the right hand side of \eqref{weieq}, i.e.,
\begin{equation}\label{weieq2}
(\wp(z,\tau)')^2= 4 (\wp(z,\tau)-e_1(\tau))  (\wp(z,\tau)-e_2(\tau))  (\wp(z,\tau)-e_3(\tau)).
\end{equation}
Note that the $e_i(\tau)$ are distinct and $e_3(\tau)-e_1(\tau)$ has a regular square root for all $\tau \in \mathbb{H}$.
Therefore, we can define
$$
\xi(z,\tau)=\frac{\wp(z,\tau)-e_1(\tau)}{e_3(\tau)-e_1(\tau)},
$$
and
$$
\eta(z,\tau)=\frac{\wp(z,\tau)'}{2(e_3(\tau)-e_1(\tau))^{3/2}}.
$$
By \eqref{weieq2} we have
$$
\eta(z,\tau)^2=\xi(z,\tau) (\xi(z,\tau)-1)(\xi(z,\tau)-\lambda(\tau)),
$$
where
$$
\lambda(\tau)=\frac{e_2(\tau)-e_1(\tau)}{e_3(\tau)-e_1(\tau)}.
$$
The map $(z,\tau)\mapsto (\lambda(\tau),P(z,\tau))$, where
\begin{equation*}
P(z,\tau)=\lg 
\begin{array}{ll}
[\xi(z,\tau),\eta(z,\tau),1], & \text{ if } z \not\in \Lambda_\tau ,\\
\left[ 0,1,0\right], & \text{ otherwise,}
\end{array} \right.
\end{equation*}
gives a parameterisation of the Legendre family.
Define
\begin{equation}\label{defpi}
\begin{array}{lll}
\pi: &   \mathbb{H} \times \C^n  &\rightarrow A\\
& ( \tau, z_1 , \dots , z_n) & \mapsto (\lambda(\tau), P(z_1,\tau), \dots , P(z_n,\tau))
\end{array}
\end{equation}
We would like to find a subset of $ \mathbb{H} \times \C^n $ over which it is possible to define a univalued inverse function of $\pi$.

By Chapter VII of \cite{Fordautom}, there exists a finite index subgroup $\Gamma$ of $\text{SL}_2(\Z)$ such that $\lambda(\gamma \tau)=\lambda(\tau)$ for all $\gamma \in \Gamma$. As a fundamental domain for the action of $\Gamma$ on $\mathbb{H}$ one can take the union of six suitably chosen fundamental domains for the action of $\text{SL}_2(\Z)$ (see Fig.~48 and 49 on p.~161 of \cite{Fordautom}). We call this set $\mc{B}$.

Now we set 
$$
\mc{F}_\mc{B}=\lg   (\tau, z_1, \dots, z_n): \tau \in \mc{B}, z_1, \dots , z_n \in \mc{L}_\tau   \rg.
$$ 
Then $\pi$ has a univalued inverse $A\rightarrow \mc{F}_\mc{B}$ and we define
\begin{equation}\label{defZ}
\mc{Z}=\pi^{-1} (\cC)\cap \mc{F}_\mc{B}.
\end{equation}

Finally, we consider a small open disc $D$ on $\cC$ and see $\tau ,z_1, \dots , z_n$ as holomorphic functions on $D$. The following is a consequence of Th\'eor\`eme 5 of \cite{Bertr}.

\begin{lemma}\label{lemBer}
Consider $\tau ,z_1, \dots , z_n$ as functions on $D$. If $1, \tau,z_1, \dots , z_n$ are $\Z$-linearly independent then $\tau,z_1, \dots , z_n$ are algebraically independent over $\C$.
\end{lemma}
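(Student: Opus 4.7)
The lemma is stated explicitly as a consequence of Théorème 5 of \cite{Bertr}, so my plan is to invoke that algebraic-independence theorem and to verify that its hypotheses are met in our analytic setting. I expect no substantive new content: the whole task is the book-keeping translation between the function-field language of sections and the analytic language of periods and elliptic logarithms.

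First I would pin down the objects. Since $D$ is a small connected open disc on $\cC$, the lift of $\cC$ through the univalued inverse of $\pi$ into the fundamental domain $\mc{F}_\mc{B}$ is well defined, so $\tau, z_1, \dots, z_n$ are holomorphic functions on $D$. For each $i$, the function $z_i$ is an elliptic logarithm of the section $P_i$ of the pulled-back Legendre family, in the sense that $P_i$ corresponds to $z_i \bmod \Lambda_\tau$ under the Weierstrass uniformisation $\C/\Lambda_\tau \simeq E_{\lambda(\tau)}$. Since $\cC$ is not contained in a single fibre, $\lambda$ is non-constant on $\cC$ and therefore $\tau$ is non-constant on $D$; in particular the elliptic family over $D$ is non-isotrivial, matching one of the background assumptions of Bertrand's theorem.

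Next I would translate the hypothesis into Bertrand's setting. Bertrand's result yields algebraic independence of $\tau, z_1, \dots, z_n$ over $\C$ once one excludes the obvious, ``trivial'' $\Z$-linear relations among the elliptic logarithms, the periods $1, \tau$ and the constants. Concretely, any such trivial relation is of the form $a_0 + a_1 \tau + b_1 z_1 + \cdots + b_n z_n = 0$ with integers $a_0,a_1,b_1,\dots,b_n$ not all zero. If the $b_i$ all vanish, then $\tau$ would be a rational constant, contradicting $\tau \in \mathbb{H}$ and the non-isotriviality observed above. Otherwise, reducing modulo $\Lambda_\tau$ gives a relation $b_1 P_1 + \cdots + b_n P_n = O$ holding identically on $D$, i.e.\ a $\Z$-linear relation between the sections $P_i$. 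Both cases are excluded by the hypothesis that $1, \tau, z_1, \dots, z_n$ be $\Z$-linearly independent as functions on $D$, so the non-degeneracy hypothesis of Théorème 5 of \cite{Bertr} holds.

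Once the hypothesis is verified, Bertrand's theorem delivers the conclusion of the lemma, namely the algebraic independence of $\tau, z_1, \dots, z_n$ over $\C$. The only step requiring care is the translation above, since one must check that a $\Z$-linear relation among the sections $P_i$ on $D$ really does lift to a $\Z$-linear relation among $1, \tau, z_1, \dots, z_n$ as holomorphic functions; this uses connectedness of $D$ to ensure that the integers $a_0, a_1$ arising from the pointwise lift to the universal cover are locally constant, hence constant. No deeper obstacle arises, as Bertrand has already carried out the algebraic-independence work.
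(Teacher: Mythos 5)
The paper gives no proof of this lemma at all — it simply points to Théorème 5 of \cite{Bertr} — so your task was to supply the translation between the lemma's hypothesis and Bertrand's, and you do this correctly and by the same route. A small remark on precision: in your Case 1 you say that $a_0 + a_1\tau = 0$ would make $\tau$ a rational constant, ``contradicting $\tau \in \mathbb{H}$ and the non-isotriviality.'' These are two independent (and both valid) ways to rule the case out; in fact the cleanest observation is that $a_0 + a_1\tau = 0$ with $a_1 \neq 0$ forces $\tau \in \Q \subset \R$, impossible for $\tau$ in the upper half-plane — so the lemma's hypothesis that $1,\tau,z_1,\dots,z_n$ be $\Z$-independent is, in this setting, simply equivalent to $\Z$-independence of the sections $P_i$, while non-isotriviality is supplied separately by the geometric assumption that $\cC$ dominates the $\lambda$-line. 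Your Case 2 reduction, including the remark that the period-lattice coefficients are locally constant and hence constant on the connected disc $D$, is exactly the right point to flag. Overall the proposal is sound and faithfully reproduces what a reader would need to extract from Bertrand's theorem.
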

\subsection{Complex Multiplication}

Suppose now that $E_{\lambda_0}$ has complex multiplication for some $\lambda_0$. Then, the associated $\tau_0 \in \mathcal{B}$ is a quadratic number with minimal polynomial $aX^2+bX+c$ and discriminant $D_0=b^2-4ac$. By Theorem 1 on p.~90 of \cite{Langell} we have End$(E_{\lambda_0})=\Oseen_{\lambda_0}=\Z[\rho]$, where $\rho=(D_0+\sqrt{D_0})/2$. Let $cl(\Oseen_{\lambda_0})$ be the class number of $\Oseen_{\lambda_0}$. Since the endomorphism $\rho$ has degree $(D_0^2-D_0)/4$, we have
\begin{equation}\label{heightrho}
h(\rho P) \ll |D_0|^2( h(P)+1),
\end{equation}
for any $P \in E_{\lambda_0}(\Qbar)$.

Now, by the general theory of complex multiplication we have that 
\begin{equation}\label{degcln}
[\Q(j_0):\Q]=cl(\Oseen_{\lambda_0}),
\end{equation}
where $j_0$ is the $j$-invariant of $E_{\lambda_0}$. Moreover, a theorem of Siegel in the form of Theorem 1.2 of \cite{Breuer} gives us the estimate 
\begin{equation}\label{BrSig}
|D_0|^{\frac{1}{2}-\epsilon} \ll_\epsilon  cl(\Oseen_{\lambda_0}) \ll_\epsilon |D_0|^{\frac{1}{2}+\epsilon}.
\end{equation}


We know that to $\lambda_0$ we can associate a unique $\tau_0 \in \mc{B}$ and a $\tau_0'$ in the usual fundamental domain for the action of SL$_2(\Z)$. These two have the same discriminant $D_0$. There is a finite set of elements of SL$_2(\Z)$ that sends any $\tau \in \mc{B}$ to the usual fundamental domain. Therefore, we have $H(\tau_0)\ll H(\tau_0')$.
If $a'X^2+b'X+c'$ is the minimal polynomial of $\tau_0'$, then $\tau'_0= (-b'\pm \sqrt{D_0})/(2a')$. Since $|$Re$(\tau_0')|\leq 1/2$ and Im$(\tau_0') \geq 1/2$, we have $|b'|\leq| a'| \leq |D_0|^{1/2}$. Therefore, by standard properties of heights we have
\begin{equation}\label{tauheightbound}
H(\tau_0)\ll H(\tau_0') \leq 2 H\left(\frac{b'}{2a'} \right) H\left(\frac{\sqrt{D_0}}{2a'}\right)  \ll   |D_0|^{\frac{3}{2}}.
\end{equation}

Recall the $q$-expansion of the $j$ invariant $j(\tau)=q^{-1}+744+196884q+\dots$ where $q=e^{2\pi i \tau}$. If $\tau$ is in the usual fundamental domain then $\text{Im}(\tau)\geq \sqrt{3} /2$ and therefore $$ \left| \log |j(\tau)|-2\pi \text{Im}(\tau)\right| \ll 1,$$
(see also equation (1) on p.~146 of \cite{Bilu13}). Now, let $\lambda_0, D_0, \tau_0'$ and $j_0$ be as above. We have that $\text{Im}(\tau_0')\leq |D_0|^{1/2}$. Now,  $j_0$ is an algebraic integer and therefore only the the archimedean places contribute to its height. Moreover, all conjugates of $j_0$ correspond to elliptic curves with complex multiplication with the same discriminant. Therefore, we have $$h(j_0)= \frac{1}{[\Q(j_0):\Q]} \sum \log^+ |j_0^\sigma| \ll |D_0|^{1/2}$$ and, since $j_0$ is a rational function of $\lambda_0$, we have
\begin{equation}\label{heightlamdisc}
h(\lambda_0)\ll |D_0|^{1/2}.
\end{equation} 

\section{O-minimality, definability and rational points}

For the basic properties and examples of o-minimal structures we refer to \cite{vandenDries1998} and \cite{DriesMiller}.

\begin{definition}
A \textit{structure} is a sequence $\mathcal{S}=\left( \mathcal{S}_N\right)$, $N\geq 1$, where each $\mathcal{S}_N$ is a collection of subsets of $\R^N$ such that, for each $N,M \geq 1$:
\begin{enumerate}
\item $\mathcal{S}_N$ is a boolean algebra (under the usual set-theoretic operations);
\item $\mathcal{S}_N$ contains every semialgebraic subset of $\R^N$;
\item if $A\in \mathcal{S}_N$ and $B\in \mathcal{S}_M$, then $A\times B \in \mathcal{S}_{N+M}$;
\item if $A \in \mathcal{S}_{N+M}$, then $\pi (A) \in \mathcal{S}_N$, where $\pi :\R^{N+M}\rightarrow \R^N$ is the projection onto the first $N$ coordinates.
\end{enumerate}
If $\mathcal{S}$ is a structure and, in addition,
\begin{enumerate}
\item[(5)] $\mathcal{S}_1$ consists of all finite union of open intervals and points
\end{enumerate}
then $\mathcal{S}$ is called an \textit{o-minimal structure}.
\end{definition}
Given a structure $\mathcal{S}$, we say that $S \subseteq \R^N$ is a \textit{definable set} if $S\in \mathcal{S}_N$. 
Let $S\subseteq \R^N$ and $f:S\rightarrow \R^M$ be a function. 
We call $f$ a \emph{definable function} if its graph $\lg (x,y) \in S\times \R^{M}:y=f(x) \rg$ is a definable set. It is not hard to see that images and preimages of definable sets via definable functions are still definable.

There are many examples of o-minimal structures. In this article we deal with sets definable in the structure $\R_{\text{an,exp}}$. The o-minimality of this structure was proved by van den Dries and Miller \cite{vandenDriesMiller}.

We now fix an o-minimal structure $\mathcal{S}$. 
We are going to use a result from \cite{HabPila14}. 

For a $Z \subseteq \R^{M+N}$, a positive integer $d$ and a positive real number $T$ we define
\begin{equation*}\label{def}
Z^{\sim}(d,T)=\lg (y,z)  \in  Z:  H_d(y) \leq T \rg.
\end{equation*}

By $\pi_1$ and $\pi_2$ we indicate the projections of $Z$ to the first $M$ and the last $N$ coordinates, respectively.

\begin{proposition}[\cite{HabPila14}, Corollary 7.2]\label{HabPila}
Let $Z\subseteq \R^{M+N}$ be a definable set.
For every $\epsilon>0$ there exists a constant $c=c(Z,d,\epsilon)$ with the following property. If $T\geq 1 $ and $|\pi_2\left( Z^{\sim}(d ,T) \right)|> c T^\epsilon$, then there exists a continuous definable function $\delta:[0,1] \rightarrow Z$ such that
\begin{enumerate}
\item the composition $\pi_1 \circ \delta : [0,1] \rightarrow \R^{M}$ is semi-algebraic and its restriction to $(0,1)$ is real analytic;
\item the composition $\pi_2 \circ \delta : [0,1] \rightarrow \R^N$ is non-constant.
\end{enumerate}
\end{proposition}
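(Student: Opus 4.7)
The strategy is to derive this from the Pila--Wilkie counting theorem for algebraic points of bounded degree, in its block-family formulation (due to Pila). Applied to $Z\subseteq \R^{M+N}$, this version says that for any $d,\epsilon$ there is a definable family of semi-algebraic subsets $\{B_s\}_{s\in S}\subseteq Z$ such that every point $(y,z)\in Z^{\sim}(d,T)$ either lies in some $B_s$ with $B_s\subseteq Z$ of positive dimension, or belongs to an exceptional set of cardinality at most $c' T^{\epsilon}$, where $c'=c'(Z,d,\epsilon)$.

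First I would choose the constant $c=c(Z,d,\epsilon)$ of the proposition to dominate $c'$ by enough of a margin so that the hypothesis $|\pi_2(Z^{\sim}(d,T))|>cT^{\epsilon}$ forces at least two points of $Z^{\sim}(d,T)$ with distinct $\pi_2$-coordinates to lie in a common block $B_s$. This is the pigeonhole step: since the exceptional set absorbs at most $c'T^{\epsilon}$ of the $\pi_2$-values, the remainder must be covered by the finitely many blocks $B_s$ and some block must collect at least two points with distinct last coordinates. In particular, $\pi_2$ is non-constant on $B_s$.

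Now $B_s$ is a positive-dimensional semi-algebraic subset of $\R^{M+N}$ contained in $Z$. Taking a connected semi-algebraic component of $B_s$ that contains two points with distinct $\pi_2$-values, I can join them by a semi-algebraic arc $\delta:[0,1]\to B_s\subseteq Z$ using standard semi-algebraic connectivity. Then $\pi_1\circ\delta$ and $\pi_2\circ\delta$ are both semi-algebraic (so in particular definable and continuous), piecewise real analytic, and by construction $\pi_2\circ\delta$ is non-constant. After possibly reparametrising so that the break points of $\pi_1\circ\delta$ sit at the endpoints $\{0,1\}$ and subdividing if necessary, its restriction to $(0,1)$ is real analytic, as required.

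The main obstacle is the first step, namely invoking the block-family version of Pila--Wilkie for algebraic points of degree at most $d$: the original Pila--Wilkie theorem handles only rational points, so one needs Pila's extension to algebraic points of bounded degree together with the refinement producing semi-algebraic (rather than merely definable) blocks. Once this input is available, the remainder is a direct pigeonhole and an elementary use of semi-algebraic path-connectedness and the piecewise analyticity of semi-algebraic maps.
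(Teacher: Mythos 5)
Your plan has a genuine gap at the very first step. The block-family version of Pila--Wilkie (Pila's counting theorem for algebraic points of degree at most $d$, with its semi-algebraic block refinement) covers and bounds the points of $Z$ at which \emph{all} $M+N$ coordinates are algebraic of degree at most $d$ and height at most $T$. But the set $Z^{\sim}(d,T)$ constrains only the first $M$ coordinates $y$; the last $N$ coordinates $z$ are arbitrary real numbers, and a point of $Z^{\sim}(d,T)$ need not be an algebraic point of $Z$ at all. So the block theorem you quote simply does not apply to the points you are trying to count, and the assertion in your first paragraph is not a statement of any existing block theorem. Passing from the fully-algebraic setting to this ``semi-rational'' one, where only part of the point is height-bounded and the rest is a free real parameter, is precisely the nontrivial content of the relevant section of Habegger and Pila's paper: they must rerun the Yomdin--Gromov parameterization and Bombieri--Pila determinant method while keeping track of which coordinates are bounded and treating the remaining ones as parameters. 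That extension cannot be obtained by just quoting the usual degree-$d$ block theorem for $Z$; it is essentially the thing being proved.

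A second tell-tale sign is in the conclusion. Your construction produces a semi-algebraic arc inside a semi-algebraic block $B_s\subseteq Z$, so both $\pi_1\circ\delta$ \emph{and} $\pi_2\circ\delta$ would come out semi-algebraic. The proposition, however, asserts only that $\pi_1\circ\delta$ is semi-algebraic, while $\pi_2\circ\delta$ is merely continuous, definable and non-constant. This asymmetry mirrors the asymmetric height hypothesis: only the $y$-coordinates live in the semi-algebraic world that the counting machinery controls, and the $z$-coordinates are recovered definably (not algebraically) by following the fibers of $Z$ above the semi-algebraic piece. If your route actually worked, the stated conclusion would be strictly stronger than what Habegger and Pila claim, which is a strong hint that it does not. (A smaller point: the pigeonhole in your second step also implicitly needs the refinement that only $\ll T^{\epsilon}$ blocks are required at height $T$; you describe the family of blocks as possibly infinite but then treat them as ``finitely many.'') Your final arc-joining step is sound once the correct input is available, but supplying that input is the bulk of the proof, and it is missing here.
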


We now want to prove that the set $\mc{Z}$ defined in \eqref{defZ} is definable in $\R_{\text{an,exp}}$. From now on, by definable we mean definable in $\R_{\text{an,exp}}$ and complex sets and functions are said to be definable if they are as real objects considering their real and imaginary parts.

In \cite{PetStar}, Peterzil and Starchenko proved that, if $\mc{D}$ is the usual fundamental domain for the action of SL$_2(\Z)$ on $\mathbb{H}$, then $\wp (z,\tau)$ is a definable function when restricted to $\lg (\tau,z): \tau \in \mc{D}, z \in \mc{L}_\tau \rg$, and therefore definable if restricted to $\lg (\tau,z):\tau \in \gamma\mc{D}, z \in \mc{L}_\tau \rg$, where $\gamma \mc{D}$ is any fundamental domain for SL$_2(\Z)$. Since $\mc{B}$ is the union of six suitably chosen fundamental domains we have that $\wp (z,\tau)$ is also definable when restricted to $\lg (\tau,z):\tau \in \mc{B}, z \in \mc{L}_\tau \rg$. Therefore, the function $\pi$, defined in \eqref{defpi}, is definable when restricted to $\mc{F}_\mc{B}$. Finally, since $\cC $ is semialgebraic we can conclude that $\mc{Z}$ is definable.

\section{The main estimate}

Recall the definition of $\mc{Z}$ in \eqref{defZ}.
Define, for $T\geq 1$,
\begin{multline*}
\mc{Z}(T)= \lg  (\tau , z_1, \dots ,z_n) \in \mc{Z}: \sum a_j z_j \in \Z+\Z \tau, \mbox{ for some } (a_1, \dots ,a_n) \in \C^n\setminus \{ 0\},\right. \\ \left. \vphantom{\lg  (\tau , z_1, \dots ,z_n) \in \mc{Z}: \sum a_i z_i\in \Z+\Z \tau, \mbox{ for some } (a_1, \dots ,a_n) \in \C^n\setminus \{ 0\},\right.} \mbox{ with } H_2(\tau, a_1, \dots , a_n)\leq T   \rg .
\end{multline*}

Note that, even if for each value of $T$ this is a definable set, if we see $\mc{Z}(T)$ as family parameterized by $T$, it is not a definable family.

\begin{proposition}\label{mainest}
Under the hypotheses of Theorem \ref{mainthm}, for all $\epsilon>0$, we have $|\mc{Z}(T)|\ll_\epsilon T^\epsilon$, for all $T\geq 1$.
\end{proposition}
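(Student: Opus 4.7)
The plan is to bound $|\mc{Z}(T)|$ by applying the Habegger-Pila counting theorem (Proposition~\ref{HabPila}) to a definable set $W$ that encodes, for each point of $\mc{Z}(T)$, a witness for the underlying lattice relation; the alternative conclusion of~\ref{HabPila} (a definable arc with semi-algebraic first projection) will be excluded using Bertrand's Lemma~\ref{lemBer}.

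\textbf{Setup and height bookkeeping.} Set $M = 4 + 2n$ and $N = 2 + 2n$, and let $W \subseteq \R^{M+N}$ be the set of tuples $\bigl((\tau, a_1, \dots, a_n, m, k),\, (\tau', z_1, \dots, z_n)\bigr)$, with $\tau, \tau', a_j \in \C$ and $m, k \in \R$, such that $\tau = \tau'$, $(\tau, z_1, \dots, z_n) \in \mc{Z}$, $(a_1, \dots, a_n) \neq 0$, and $\sum_j a_j z_j = m + k\tau$. Definability of $\mc{Z}$ in $\R_{\mathrm{an,exp}}$ implies definability of $W$. For any $(\tau, z_1, \dots, z_n) \in \mc{Z}(T)$ with witness $(a_j, m, k)$ satisfying $m, k \in \Z$ and $H_2(\tau, a_1, \dots, a_n) \leq T$, inequality \eqref{absv} yields $|\tau|, |a_j| \ll T$; since $z_j \in \mc{L}_\tau$ we also get $|z_j| \ll T$ and thus $|m + k\tau| \ll T^2$. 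Because $\tau$ is an imaginary quadratic of leading coefficient at most $T$ with discriminant of absolute value $\geq 3$, $\mathrm{Im}(\tau) \gg T^{-1}$, so $|k| \ll T^3$ and $|m| \ll T^4$. Hence $H_2(\tau, a_j, m, k) \leq C T^4$ for some $C = C(\cC)$, and $\pi_2(W^\sim(2, C T^4)) \supseteq \mc{Z}(T)$.

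\textbf{Counting and the alternative.} Fix $\epsilon > 0$ and apply Proposition~\ref{HabPila} to $W$ with $d = 2$ and exponent $\epsilon' = \epsilon/4$, getting a constant $c = c(W, 2, \epsilon/4)$. I claim $|\mc{Z}(T)| \leq c\, C^{\epsilon/4}\, T^{\epsilon}$ for every $T \geq 1$; otherwise there is some $T$ with $|\pi_2(W^\sim(2, C T^4))| > c\,(C T^4)^{\epsilon/4}$, and the proposition furnishes a continuous definable $\delta\colon [0,1] \to W$ with $\pi_1 \circ \delta$ semi-algebraic (real-analytic on $(0,1)$) and $\pi_2 \circ \delta$ non-constant.

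\textbf{Contradiction via Bertrand.} Writing $\delta(t) = (\tau(t), a_j(t), m(t), k(t);\, \tau(t), z_j(t))$, injectivity of $\pi$ on $\mc{F}_\mc{B}$ and non-constancy of $\pi_2 \circ \delta$ give a non-constant real-analytic arc in $\cC$. Pick a smooth point of $\cC$ on this arc and a holomorphic disc $D \subseteq \cC$ through it with complex coordinate $w$; on $D$, $\tau$ and $z_1, \dots, z_n$ are holomorphic functions, and $\tau$ is non-constant since $\cC$ is not contained in a fiber of $\lambda$. At a non-critical $t_0$ where $w'(t_0) \neq 0$ (such points exist since $w(t)$ is real-analytic and non-constant), the identity $\sum_j a_j(t) z_j(t) = m(t) + k(t)\tau(t)$ extends by analytic continuation to a complex neighborhood of $t_0$, identified with a neighborhood of $w(t_0)$ in $D$ via the local biholomorphism $w$. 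Because $\pi_1 \circ \delta$ is semi-algebraic, each of $\tau(t), a_j(t), m(t), k(t)$ is algebraic over $\C(t)$; non-constancy of $\tau(t)$ together with the relation $P(\tau(t), t) = 0$ (for $P$ its minimal polynomial) shows $t$ is algebraic over $\C(\tau)$, and therefore $a_j, m, k$, viewed as analytic functions of $w$ on the neighborhood, lie in $\overline{\C(\tau)}$. The relation then reads
\[
\sum_{j=1}^{n} A_j(\tau)\, z_j \;=\; G(\tau) + H(\tau)\,\tau
\]
with $A_j, G, H \in \overline{\C(\tau)}$. By Lemma~\ref{lemBer} (applicable since $\cC$ is not contained in a flat subgroup scheme, so $1, \tau, z_1, \dots, z_n$ are $\Z$-linearly independent on $D$), the functions $\tau, z_1, \dots, z_n$ are algebraically independent over $\C$, and hence $z_1, \dots, z_n$ are algebraically independent over $\overline{\C(\tau)}$. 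Reading the displayed identity as a degree-one polynomial in $z_1, \dots, z_n$ with coefficients in $\overline{\C(\tau)}$ forces $A_j \equiv 0$ for every $j$, contradicting $(a_j(t)) \neq 0$ along $\delta$.

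\textbf{Main obstacle.} The delicate step is the passage from ``$a_j, m, k$ algebraic over $\C(t)$'' (from semi-algebraicity of $\pi_1 \circ \delta$) to ``$a_j, m, k$ algebraic over $\C(\tau)$ as holomorphic functions on a disc in $D$''. This requires working on a complex neighborhood around a non-critical point and using the polynomial relation between $\tau(t)$ and $t$ to realise $\C(t)$ as a finite algebraic extension of $\C(\tau)$. Once this translation is in place, Bertrand's functional transcendence closes the argument immediately.
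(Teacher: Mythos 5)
Your proof follows the paper's own strategy: encode the witnesses in a definable set $W$, apply the Habegger--Pila counting theorem (Proposition~\ref{HabPila}) to bound $\pi_2(W^\sim)$, and rule out the semi-algebraic-arc alternative using Bertrand's functional transcendence (Lemma~\ref{lemBer}). The argument is correct.

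There are two genuine (minor) differences worth recording. First, you place a duplicate copy of $\tau$ in the \emph{counted} block $\pi_2$ as well as in the height-bounded block $\pi_1$. This gives $\mc{Z}(T)\subseteq \pi_2(W^\sim(2,CT^4))$ directly, so you never need to control the fiber of $\mc{Z}\to\C^n$ over a fixed $(z_1,\dots,z_n)$. The paper instead keeps $\pi_2$ as the $z$-block only, and compensates with a separate fiber-finiteness argument (Lemma~\ref{lemomin}), which in turn needs another invocation of Lemma~\ref{lemBer}. Your variant is a small simplification. Second, your height bookkeeping for the integers $m,k$ gives $|k|\ll T^3$, $|m|\ll T^4$, via the lower bound $\mathrm{Im}(\tau)\gg T^{-1}$ coming from $|a|\le H_2(\tau)\le T$ and $|D|\ge 3$. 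The paper asserts $|a_{n+1}|,|a_{n+2}|\ll T^2$, which would require $\mathrm{Im}(\tau)\gg 1$ uniformly on $\mc{B}$; since $\mc{B}$ is a fundamental domain for $\Gamma(2)$ and reaches the cusps $0$ and $1$, that uniform lower bound is not obvious, so your $T^4$ bound is the safer one. (The exponent is immaterial for the conclusion, as $\epsilon$ can be rescaled.) One presentational caveat: your phrasing about ``a complex neighborhood of $t_0$, identified \dots via the local biholomorphism $w$'' conflates the real arc parameter with the complex coordinate on $\cC$; the cleaner route, which your argument really uses, is to observe that $a_j, m, k$ are algebraic over $\C(\tau)$ along the arc, lift them to holomorphic branches over a complex disc in $\cC$ where $\tau$ is a local coordinate, and then invoke the identity theorem to propagate the relation from the real-one-dimensional arc to the disc.
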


We indicate the imaginary unit by $I$ and define
\begin{multline*}
W=\lg (\alpha_{1}, \beta_{1}, \dots ,\alpha_n, \beta_n, \mu_1, \mu_2,u,v,x_1, y_1, \dots , x_n,y_n )\in \R^{4n+4}: \vphantom{\sum_{i=1}^n} \right.\\ \left. (u+vI,x_1+y_1 I,\dots , x_n+y_n I)\in \mc{Z}, \sum_{i=1}^n (\alpha_i+\beta_i I) (x_i+y_iI)= \mu_1 +\mu_2 (u+vI) \rg,
\end{multline*}
which is a definable set.

We want to apply Proposition \ref{HabPila} to $W$ with
$$
W^\sim (2,T)=\lg (\alpha_{1},  \dots , \beta_n, \mu_1, \mu_2,u,v,x_1, \dots ,y_n )\in W: H_2(\alpha_{1},  \dots , \beta_n, \mu_1, \mu_2,u,v)\leq T  \rg.
$$
We let $\pi_1$ and $\pi_2$ be the projections on the first $2n+4$ and the last $2n$ coordinates, respectively.

\begin{lemma}\label{lemest}
For all $\epsilon>0$, we have $|\pi_2\left( W^\sim (2,T)\right)|\ll_\epsilon T^\epsilon$, for all $T\geq 1$.
\end{lemma}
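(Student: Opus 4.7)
The plan is to argue by contradiction via Proposition \ref{HabPila} applied with $d=2$. Suppose $|\pi_2(W^\sim(2,T))|>cT^\epsilon$ for the constant $c=c(W,2,\epsilon)$ supplied there; the proposition produces a continuous definable function $\delta\colon[0,1]\to W$ with $\pi_1\circ\delta$ semi-algebraic and real-analytic on $(0,1)$, and $\pi_2\circ\delta$ non-constant. Writing $a_i(t)=\alpha_i(t)+\beta_i(t)I$, $\tau(t)=u(t)+v(t)I$ and $z_i(t)=x_i(t)+y_i(t)I$ along $\delta$, the identity $\sum_i a_i(t)z_i(t)=\mu_1(t)+\mu_2(t)\tau(t)$ holds identically, while the tuple $(a_i,\mu_j,\tau)$ is constrained to a semi-algebraic arc.

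I would first rule out $\tau$ being constant on $(0,1)$: in that case $\pi\circ\delta$ would lie in $\cC\cap\{\lambda=\lambda(\tau_0)\}$, a finite set since $\cC$ is not contained in a fibre, and so $\pi\circ\delta$ would be constant; injectivity of $\pi$ on $\mc{F}_\mc{B}$ would then force every $z_i(t)$ to be constant, contradicting the non-constancy of $\pi_2\circ\delta$. Using real-analyticity I would next complexify in a disc $U$ centred at some $t_0\in(0,1)$, so that $a_i,\mu_j,\tau,z_i$ extend to holomorphic functions of $\tilde t\in U$; semi-algebraicity of $\pi_1\circ\delta$ upgrades $a_i(\tilde t),\mu_j(\tilde t),\tau(\tilde t)$ to algebraic (Nash) functions of $\tilde t$. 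Shrinking $U$, the composition $\tilde t\mapsto\pi(\tau(\tilde t),z_1(\tilde t),\ldots,z_n(\tilde t))$ parametrises a holomorphic curve in an open disc $D\subseteq\cC$ on which $\tau$ and the $z_i$ are holomorphic branches of $\pi^{-1}$.

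The main step is elimination. Since $\tau(\tilde t)$ is non-constant algebraic in $\tilde t$, there is a finite extension $K/\C(\tau(\tilde t))$ containing every $a_i(\tilde t)$ and $\mu_j(\tilde t)$. Taking the $K/\C(\tau)$-norm of the linear form $L=\sum a_iz_i-\mu_1-\mu_2\tau\in K[z_1,\ldots,z_n]$ yields a polynomial $P\in\C[\tau,X_1,\ldots,X_n]$, non-zero whenever $L\neq 0$, which vanishes on the analytic curve $\tilde t\mapsto(\tau(\tilde t),z_1(\tilde t),\ldots,z_n(\tilde t))$; by the identity principle it vanishes on all of $D$. Generic independence of $P_1,\ldots,P_n$ together with non-triviality of $\tau$ makes $1,\tau,z_1,\ldots,z_n$ $\Z$-linearly independent on $D$, so Lemma \ref{lemBer} forces $\tau,z_1,\ldots,z_n$ to be algebraically independent over $\C$ and hence $P\equiv 0$ as a polynomial, the sought contradiction.

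The hardest point will be securing the non-triviality of $L$: in the degenerate branch where $a_i(\tilde t)\equiv 0$ and $\mu_j(\tilde t)\equiv 0$ along $\delta$, the relation is vacuous and $P$ is trivially zero. Here one uses that $\tau(\tilde t)$ is still a non-constant algebraic function of $\tilde t$, while the holomorphic curve $\tilde t\mapsto\pi(\tau(\tilde t),z(\tilde t))$ in $\cC$ is open; a local holomorphic inverse pushes the algebraic relation $Q(\tilde t,\tau(\tilde t))=0$ forward to an algebraic relation for $\tau$ over the function field $\C(\cC)$, contradicting the transcendence of $\tau$ over $\C(\cC)$ which comes from the transcendence of the modular function $\lambda$.
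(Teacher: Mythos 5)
Your Steps 1--6 reproduce, in more detail, the argument the paper gives: apply Proposition~\ref{HabPila}, extract a semi-algebraic arc, observe that the coefficient functions $a_i,\mu_j,\tau$ have transcendence degree at most~$1$ over $\C$, push the linear relation to a polynomial relation $P(\tau,z_1,\dots,z_n)=0$ over $\C$ by an elimination/norm argument, continue analytically to a disc in $\cC$, and contradict Lemma~\ref{lemBer}. Your explicit treatment of the constant-$\tau$ case, of the complexification of the real-analytic arc, and of the norm argument are all things the paper compresses into the single sentence ``Therefore, $\tau,z_1,\dots,z_n$ are algebraically dependent on~$J$,'' so your elaboration is correct and welcome.

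The problem is your Step~7. You are right that the published definition of $W$, taken literally, admits the degenerate branch $(\alpha_1,\dots,\beta_n,\mu_1,\mu_2)\equiv 0$, and that this breaks the elimination step. But the repair you propose does not work. When $a_i\equiv 0$ and $\mu_j\equiv 0$, the only algebraic information you get from semi-algebraicity of $\pi_1\circ\delta$ is that $\tau(\tilde t)$ is algebraic over $\C(\tilde t)$; pushing this forward by the local holomorphic inverse $\Psi^{-1}$ gives a relation over $\C(\Psi^{-1})$, and $\Psi^{-1}$ is just an arbitrary local holomorphic coordinate on $\cC$. It is not in $\C(\cC)$, nor algebraic over it (nothing forces the arc parameter to be algebraic), so you have not produced an algebraic relation for $\tau$ over $\C(\cC)$ and there is no conflict with the transcendence of the modular function. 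Worse, no proof of the degenerate case can exist: the degenerate locus $\{0\}\times\mc{Z}\subseteq W$ makes the lemma, as literally stated, false. Indeed the $\tau$-projection of $\mc{Z}$ is an open two-dimensional subset of $\mc{B}$ (since $\lambda|_\cC$ is dominant), so it contains $\gg T^{\delta}$ points $\tau$ with $H_2(\mathrm{Re}\,\tau,\mathrm{Im}\,\tau)\le T$ for some $\delta>0$; each such $\tau$ contributes at least one, and generically boundedly many, tuples $(z_1,\dots,z_n)$ to $\pi_2\bigl(W^\sim(2,T)\bigr)$, giving a polynomial lower bound incompatible with the claimed $\ll_\epsilon T^\epsilon$. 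The intended definition of $W$ (and the one actually used in the deduction of Proposition~\ref{mainest}, where the coefficients $(a_1,\dots,a_n)$ are taken in $\C^n\setminus\{0\}$) must carry the condition $(\alpha_1,\dots,\beta_n)\neq 0$; with that condition the degenerate branch never arises, Steps~1--6 give the proof, and Step~7 should simply be deleted.
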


\begin{proof}
If $|\pi_2\left( W^\sim (2,T)\right)|> c T^\epsilon$, for some positive constant $c$, then Proposition \ref{HabPila} implies that there exists a continuous definable function $\delta:[0,1] \rightarrow W$ such that $\delta_1=\pi_1 \circ \delta : [0,1] \rightarrow \R^{2n+4}$ is semi-algebraic and $\delta_2=\pi_2 \circ \delta : [0,1] \rightarrow \R^{2n}$ is non-constant. This in turn implies that there exists a connected $J\subseteq [0,1]$ such that $\delta_1(J)$ is an algebraic curve segment and $\delta_2(J)$ has positive dimension.

We consider the coordinates $\alpha_{1},  \dots , \beta_n, \mu_1, \mu_2,u,v,x_1, \dots ,y_n$ as functions on $J$ and set $\tau=u+vI$ and $z_i=x_i+y_iI$. Moreover, we consider $$\mc{W}=(\tau(J),z_1(J),\dots ,z_n(J)) \subseteq \mc{Z}.$$ On $J$ the functions $\alpha_1, \dots , \beta_n,\mu_1, \mu_2, \tau$ generate a field of transcendence degree at most 1 over $\C$. Moreover, we have the relation
$$
\sum (\alpha_i+\beta_i I) z_i =\mu_1+\mu_2 \tau.
$$
Therefore, $\tau, z_1, \dots ,z_n$ are algebraically dependent on $J$.

Finally, we can consider $\tau, z_1, \dots ,z_n$ as functions on $\pi (\mc{W})$. They satisfy an algebraic relation which can be analytically continued to an open disc in $\cC$. By Lemma \ref{lemBer} this would imply that $1,\tau, z_1, \dots ,z_n$ are $\Z$-linearly dependent contradicting the hypothesis of generic independence of $P_1, \dots , P_n$.
\end{proof}

\begin{lemma}\label{lemomin}
There exists a positive constant $c'=c'(\mc{Z})$ such that for all $(z_1, \dots ,z_n)\in \C^n$ and $T$ there are at most $c$ values of $\tau \in \C$ with $(\tau , z_1, \dots ,z_n)\in \mc{Z}(T)$.
\end{lemma}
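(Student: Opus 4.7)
The plan is to apply the uniform finiteness property of definable families in o-minimal structures to the projection
$$
p\colon \mc{Z}\to \C^n,\qquad (\tau,z_1,\dots,z_n)\mapsto (z_1,\dots,z_n).
$$
Since $\mc{Z}(T)\subseteq \mc{Z}$ for every $T\geq 1$, any uniform bound on the cardinalities of the fibres of $p$ will automatically bound $|\{\tau:(\tau,z_1,\dots,z_n)\in\mc{Z}(T)\}|$ independently of both $(z_1,\dots,z_n)$ and $T$, which is exactly what is required. The set $\mc{Z}$ is definable in $\R_{\mathrm{an,exp}}$ (as established at the end of Section~3) and $p$ is clearly a definable map, so the fibres of $p$ form a definable family of subsets of $\C\cong\R^2$. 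By the standard uniform finiteness theorem in o-minimal structures (see, e.g., \cite{vandenDries1998}), it therefore suffices to prove that every individual fibre of $p$ is finite.

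To prove this I would argue by contradiction. Suppose that $p^{-1}(z_1^\ast,\dots,z_n^\ast)$ is infinite for some $(z_1^\ast,\dots,z_n^\ast)\in\C^n$. An infinite definable subset of $\C$ necessarily has positive dimension, so there is a non-constant real-analytic arc $t\mapsto(\tau(t),z_1^\ast,\dots,z_n^\ast)$ in $\mc{Z}$. Composing with $\pi$, which is injective on $\mc{F}_\mc{B}$, we obtain a non-constant arc inside $\cC$ along which the elliptic logarithms $z_1,\dots,z_n$ take the constant values $z_1^\ast,\dots,z_n^\ast$. I would then pick a small open disc $D\subseteq\cC$ meeting this arc together with a branch of the inverse of $\pi$ over $D$; the functions $\tau,z_1,\dots,z_n$ then become holomorphic on $D$, and the identity theorem forces each $z_i$ to be constantly equal to $z_i^\ast$ on the whole of $D$.

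In particular, $\tau,z_1,\dots,z_n$ are algebraically dependent over $\C$ on $D$, so by the contrapositive of Lemma~\ref{lemBer} the tuple $1,\tau,z_1,\dots,z_n$ is $\Z$-linearly dependent on $D$. Since $\tau$ takes values in the upper half-plane, such a relation cannot involve only $1$ and $\tau$, so there exist $(a_1,\dots,a_n)\in\Z^n\setminus\{0\}$ and $m,k\in\Z$ with $a_1z_1+\dots+a_nz_n=m+k\tau$ on $D$. Translating this through $\pi$, the relation $a_1P_1+\dots+a_nP_n=O$ holds on the open subset $D$ of the irreducible curve $\cC$ and therefore on all of $\cC$, contradicting the generic independence of $P_1,\dots,P_n$ assumed in Theorem~\ref{mainthm}.

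The only real step is the exclusion of positive-dimensional fibres; once these are ruled out the uniform bound is a formal consequence of o-minimality. The hard input is Lemma~\ref{lemBer}, namely Bertrand's transcendence theorem, which is precisely what converts the algebro-geometric hypothesis of generic independence of the $P_i$ into the analytic statement that none of the $z_i$ can become constant on an open subset of $\cC$.
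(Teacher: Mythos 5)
Your proof follows essentially the same route as the paper: project $\mc{Z}$ to the last $n$ coordinates, invoke the o-minimal uniform bound on finite fibres, and rule out positive-dimensional fibres by observing that constancy of the $z_i$ on a disc of $\cC$ contradicts Lemma~\ref{lemBer} together with generic independence. One small slip: the reason the $\Z$-linear relation from Lemma~\ref{lemBer} cannot involve only $1$ and $\tau$ is that $\tau$ is non-constant on $D$ (because $\lambda$ is non-constant on $\cC$), not that $\tau$ lies in the upper half-plane.
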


\begin{proof}
Consider the projection map $\varphi$ of $\mc{Z}$ to the last $n$ coordinates. By o-minimality, if $\varphi^{-1}(z_1, \dots , z_n)$ has dimension 0, then there is a uniform bound on its cardinality, which only depends on $\mc{Z}$. Therefore, we only need to prove that if $(\tau, z_1, \dots , z_n)\in \mc{Z}(T)$ for some $T$, then $\varphi^{-1}(z_1, \dots , z_n)$ has dimension 0. Suppose it has positive dimension and recall that the fixed $z_1, \dots , z_n$ are algebraically dependent over $\C$. As above, this would imply that the holomorphic functions $z_1, \dots , z_n$ would be algebraically dependent on some open disc in $\cC$, again contradicting Lemma \ref{lemBer}. 
\end{proof}

We are now in position to prove Proposition \ref{mainest}.

If $(\tau, z_1, \dots, z_n) \in \mc{Z}(T)$ then $\tau$ is imaginary quadratic and there are $a_1, \dots , a_n$ not all zero and each of degree at most 2 over $\Q$ and integers $a_{n+1},a_{n+2}$ with $ \sum a_i z_i = a_{n+1}+ a_{n+2} \tau$. 
Since $H_2(\tau, a_1, \dots , a_n)\leq T$ and $z_i \in \mc{L}_\tau$ and using \eqref{absv}, we have $|\sum a_i z_i|\leq \sum |a_i| |z_i| \ll T \max \{1, |\tau| \} \ll T^2$ and therefore we can suppose that $a_{n+1},a_{n+2}$ have absolute value $\ll T^2$.

By these considerations and by \eqref{H2re} and \eqref{H2im}, we have that
\begin{multline*}
\left(\text{Re} (a_1),\text{Im} (a_1), \dots ,\text{Re} (a_n),\text{Im} (a_n) ,a_{n+1},a_{n+2}, \right. \\ \left. \text{Re} (\tau),\text{Im} (\tau) ,  \text{Re} (z_1),\text{Im} (z_1), \dots ,\text{Re} (z_n),\text{Im} (z_n)  \right) \in W^\sim (2,\gamma T^2),
\end{multline*}
for some positive constant $\gamma$. By Lemma \ref{lemomin}, any point of $\pi_2\left (W^\sim (2,\gamma T^2)\right)$ is associated to at most $c'$ different elements of $\mc{Z}(T)$. Therefore, Lemma \ref{lemest} gives the claim of Proposition \ref{mainest}. 

\section{Proof of Theorem \ref{mainthm}}

In this section $\gamma_1, \gamma_2, \dots $  will be positive constants depending only on $\cC$. Recall that $\cC$ is defined over a number field $k$.

We call $\cC'$ the set of points $\bo{c} \in \cC(\C)$ such that $E_{\lambda (\bo{c})}$ has complex multiplication and there exists $(a_1, \dots, a_n) \in \text{End}(E_{\lambda (\bo{c})})^n \setminus \{0\}$ with
$$
a_1 P_1(\bo{c}) + \dots + a_n P_n (\bo{c})=O,
$$
i.e., the set we want to prove to be finite. Note that, if $E_{\lambda(\bo{c})}$ has complex multiplication, then $\lambda(\bo{c})$ is algebraic and therefore $\cC'$ consists of algebraic points of $\cC$.

Fix $\bo{c}_0 \in \cC'$, call $d_0$ its degree over $k$ and $D_0$ the discriminant of End$(E_{\lambda (\bo{c}_0)})$. Now, for all $\sigma \in \text{Gal}(\overline{k}/k)$, we have that all conjugates $\bo{c}_0^{\sigma}$ of $\bo{c}_0$ are in $ \cC'$. Actually, all End$(E_{\lambda (\bo{c}_0^{\sigma})})$ are isomorphic and 
\begin{equation}\label{eqrelsig}
a_1^{\sigma} P_1(\bo{c}_0^{\sigma}) + \dots + a_n^{\sigma} P_n (\bo{c}_0^{\sigma})=O,
\end{equation}
on $E_{\lambda (\bo{c}_0^{\sigma})}$. 

\begin{lemma}\label{lemgen}
For all $\bo{c}_0 \in \cC'$ there is $(a_1, \dots, a_n) \in \text{\emph{End}}(E_{\lambda (\bo{c}_0)})^n \setminus \{0\}$ satisfying \eqref{eqrelsig} with
$$
H_2(a_1, \dots ,a_n)\ll |D_0|^{\gamma_1} ,
$$
for some positive $\gamma_1$.
\end{lemma}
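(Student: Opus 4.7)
The plan is to combine the height data gathered in Section 3 with an effective small-relation theorem in the spirit of Masser and David, then convert the resulting Archimedean bound on the coefficients into a bound on their $2$-heights.

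First I collect the relevant heights and degrees attached to $\bo{c}_0$. Since $E_{\lambda_0}$, with $\lambda_0=\lambda(\bo{c}_0)$, has complex multiplication of discriminant $D_0$, inequality \eqref{heightlamdisc} gives $h(\lambda_0)\ll |D_0|^{1/2}$, and \eqref{ineqheights} then yields $\widehat{h}(P_i(\bo{c}_0))\ll |D_0|^{1/2}$ for every $i$. Combining \eqref{ineqdeg} with \eqref{degcln} and the Siegel estimate \eqref{BrSig} bounds the degree of the field of definition of $\bo{c}_0$ by $\ll_\epsilon |D_0|^{1/2+\epsilon}$. Recall moreover that $\Oseen_{\lambda_0}=\Z[\rho]$ with $\rho=(D_0+\sqrt{D_0})/2$; any element $a=m+n\rho$ then satisfies $|a|\asymp |m|+|n||D_0|^{1/2}$, and its minimal polynomial over $\Z$ has coefficients which are polynomial expressions in $m$, $n$ and $D_0$.

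Next I apply a small-relation theorem. By hypothesis the points $P_1(\bo{c}_0),\dots,P_n(\bo{c}_0)$ are $\Oseen_{\lambda_0}$-dependent in $E_{\lambda_0}(\Qbar)$. A result of Masser--David type (obtained by a pigeonhole argument in the $\Oseen_{\lambda_0}$-module they generate, combined with the quadraticity of $\widehat{h}$ on a CM elliptic curve and Northcott counting in a number field of degree $\ll_\epsilon |D_0|^{1/2+\epsilon}$) produces a non-trivial relation $a_1P_1(\bo{c}_0)+\dots+a_nP_n(\bo{c}_0)=O$ with $a_i\in\Oseen_{\lambda_0}$ satisfying $\max_i|a_i|\ll |D_0|^{\gamma}$ for some absolute $\gamma$ depending only on $n$ and $\cC$. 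The upper bound for the Archimedean size of the coefficients is polynomial in $\widehat{h}(P_i)$, in the degree of the field of definition, and in $|D_0|$, all of which are powers of $|D_0|$ by the first step.

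Finally I pass from $|a_i|\ll|D_0|^\gamma$ to the desired bound on $H_2(a_i)$. Writing $a_i=m_i+n_i\rho$, the estimate forces both $|m_i|\ll|D_0|^\gamma$ and $|n_i|\ll|D_0|^{\gamma-1/2}$, hence the coefficients of the minimal polynomial of $a_i$ over $\Z$ are $\ll|D_0|^{\gamma_1}$ for a suitable $\gamma_1$, giving the lemma. The only delicate ingredient is the small-relation theorem itself: one needs to ensure that the dependence of the Masser--David bound on the CM datum enters only as a positive power of $|D_0|$, so that the height input $|D_0|^{1/2}$ and the degree input $|D_0|^{1/2+\epsilon}$ combine into a clean polynomial bound. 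Everything else is routine bookkeeping using the inequalities already recorded in Section 3.
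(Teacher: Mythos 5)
Your high-level plan (gather height/degree bounds, invoke a small-relation theorem, convert an Archimedean bound on the $a_i$ to a bound on $H_2$) is the right one, and the final conversion step is correct. But the middle step — the existence of a small relation with coefficients in $\Oseen_{\lambda_0}$ whose sizes are polynomial in $|D_0|$ — is left as a black box, and this is exactly where the real work lies.

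The crucial move in the paper's proof, which you omit, is to \emph{reduce the relation over the CM order to a relation over $\Z$ among $2n$ points}. Writing a putative coefficient as $a_i=b_i+b_i'\rho$ with $b_i,b_i'\in\Z$ and $\rho=(D_0+\sqrt{D_0})/2$, the relation $\sum a_iP_i=O$ becomes the $\Z$-relation $\sum b_iQ_i+\sum b_i'Q_i'=O$ among the $2n$ points $Q_i=\phi(P_i)$ and $Q_i'=\phi(\rho P_i)$ on a Weierstrass model $E$. Only then can one apply a quantitative result over $\Z$, namely Masser's Theorem~E of \cite{Masser88}. A ``Masser--David type'' theorem producing small $\Oseen_{\lambda_0}$-relations directly is not something you can simply cite; your pigeonhole/Northcott sketch is not a proof, and quantitatively it needs exactly the three inputs the paper makes explicit: an upper bound for the heights $\widehat{h}(Q_i),\widehat{h}(Q_i')$ (the latter requiring \eqref{heightrho} because $\rho$ has degree $\asymp D_0^2$, so $\widehat{h}(Q_i')\ll|D_0|^{5/2}$, not $|D_0|^{1/2}$), a polynomial-in-$|D_0|$ upper bound for $|E_{\mathrm{tors}}(K_0)|$ (David \cite{David97}), and a polynomial lower bound for the N\'eron--Tate height of non-torsion points in $E(K_0)$ (Masser \cite{Masser89}). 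Without the $2n$-point reduction and these three ingredients, the claimed ``clean polynomial bound'' is asserted rather than established.

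Two smaller slips: \eqref{ineqheights} bounds Weil heights, not $\widehat{h}$; the comparison to N\'eron--Tate requires Zimmer's estimate. And with the paper's normalization $\rho=(D_0+\sqrt{D_0})/2$ one has $|\rho|\asymp|D_0|$, not $|D_0|^{1/2}$, so the displayed asymptotic $|a|\asymp|m|+|n||D_0|^{1/2}$ is off by a factor $|D_0|^{1/2}$ (harmless for a polynomial bound, but worth getting right).
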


\begin{proof}
We fix a $\bo{c}_0 \in \cC'$ and set $\lambda_0=\lambda (\bo{c}_0)$, $K_0=k(\bo{c}_0, \sqrt{D_0})$, $\kappa_0=[K_0:\Q]$ and $P_i=P_i(\bo{c}_0 )$.
Recall that, by \eqref{ineqdeg}, \eqref{degcln} and \eqref{BrSig}, we have that $|D_0|^{1/3} \ll \kappa_0\ll |D_0|$.
 Moreover, let $\rho=\frac{D_0+\sqrt{D_0}}{2}$ so that End$(E_{\lambda_0})=\Z+\rho \Z$. 


Let $E$ be the elliptic curve with Weierstrass equation
$$
Y'^2=4X'^3-g_2 X' - g_3,
$$
where $g_2=\frac{4}{3}(\lambda_0^2-\lambda_0+1)$ and $g_3=\frac{4}{27} (\lambda_0-2)(\lambda_0+1)(2\lambda_0-1)$. Then $E_{\lambda_0}$ and $E$ are isomorphic via the map $\phi$ given by
$$
X'=X-\frac{1}{3}(\lambda_0+1), \quad  Y'=2Y,
$$
(see (3.7), p. 1683 of \cite{MasserZannier10}), and $E$ is defined over the same field $\Q(\lambda_0)$.

We set $Q_i=\phi (P_i)$ and $Q_i'=\phi(\rho P_i)$. Then, $Q_1, \dots , Q_n, Q'_1, \dots , Q'_n$ are $2n$ points in $E(K_0)$ such that $Q_i$ and $P_i$ have same N\'eron-Tate height and the same holds for $Q_i'$ and $\rho P_i$. Moreover, $Q_1, \dots , Q_n, Q'_1, \dots , Q'_n$ and $P_1, \dots , P_n, \rho P_1, \dots , \rho P_n$
 satisfy the same relation over $\Z$. 

Let $w=\max \{1,h(g_2),h( g_3)\}$. Using the work of Zimmer \cite{Zimmer} in the form of Lemma 3.4 of \cite{David95} and by \eqref{ineqheights} and \eqref{heightlamdisc} we have, for all $i$,
$$
\widehat{h}(Q_i)\leq h(Q_i) +\frac{3}{4} w + 5 \log(2)\ll h(P_i) +h(\lambda_0)+1 \ll h(\lambda_0)+1\ll |D_0|^{1/2}.
$$
Similarly, using \eqref{heightrho}, we have
$$
\widehat{h}(Q'_i)\ll  |D_0|^2 h(P_i)+h(\lambda_0)+1 \ll |D_0|^{5/2}.
$$

Suppose at least one $P_i$ has infinite order. We use a result of Masser. By Theorem E of \cite{Masser88} we can suppose that $Q_1, \dots , Q_n, Q'_1, \dots , Q'_n$ satisfy
$$
b_1 Q_1 + \dots +b_n Q_n+b_1' Q_1' + \dots +b_n' Q_n'=O
$$
on $E$, for some integers $b_1,\dots, b_n, b'_1, \dots, b'_n$, not all zero, with 
$$
\max \{|b_1| , \dots ,|b_n| ,|b_1'|, \dots ,| b_n'|\} \leq (2n)^{2n-1} \omega \left( \frac{q}{\eta} \right)^{\frac{1}{2}(2n-1)},
$$
where $\omega=|E_{\text{tors}}(K_0)|$, $\eta=\inf \widehat{h}(Q)$ for $Q\in E(K_0) \setminus E_{\text{tors}}(K_0)$ and $q\geq \eta$ is an upper bound for $\widehat{h}(Q_i)$ and $\widehat{h}(Q_i')$. By our previous considerations we can take $q \ll |D_0|^{5/2}$. We need to bound $\omega$ and $\eta$.

For the first we use a result of David \cite{David97}. By his Th\'eor\`eme 1.2(i) we have, after having chosen an archimedean $v$ and noting that $h_v(E)\geq \frac{\sqrt{3}}{2}$,
$$
\omega \ll \kappa_0 t +\kappa_0 \log \kappa_0,
$$
where $t=\max \{1, h(j_E) \}$. Since $E$ and $E_{\lambda_0}$ are isomorphic they have the same $j$-invariant and therefore, by \eqref{heightlamdisc}, we can take $t\ll |D_0|^{1/2}$. Therefore we have $\omega \ll |D_0|^{\gamma_2}$.

The lower bound on $\eta$ relies on a result of Masser. By Corollary 1 of \cite{Masser89} we have
$$
\eta \gg \kappa_0^{-\gamma_3} w^{-\gamma_4},
$$
where recall that $w=\max \{1,h(g_2),h( g_3)\} $. Actually, in Masser's bound a constant depending on $\kappa_0$ appears in the denominator but going through the proof one can see that it depends polynomially on $\kappa_0$, as noted by David on p.~109 of \cite{David97}. We have $w \ll h(\lambda_0)+1$ which is in turn $\ll |D_0|^{1/2}$. Therefore, we have $\eta \gg  |D_0|^{\gamma_5}$.
Combining the bounds on $q$, $\omega$ and $\eta$ we can suppose  
$$
\max \{|b_1| , \dots ,|b_n| ,|b_1'|, \dots ,| b_n'|\} \ll |D_0|^{\gamma_6}.
$$
In case all $P_i$ are torsion we can get the same estimate using only the bound on $\omega$.

Finally, by \eqref{heightequiv} we have $H_2(b_i+\rho b_i')\leq 4 H(b_i+\rho b_i')^2$ for all $i$, and using standard properties of heights we have the claim.
\end{proof}

Now, recall the definition \eqref{defpi} of the map $\pi$ and consider $\pi^{-1}(\bo{c}_0^\sigma)\cap \mc{F}_{\mc{B}}$ which consists of one point $(\tau_0^\sigma, z_1^\sigma, \dots , z_n^\sigma)$ belonging to $\mc{Z}$. We remark that, for varying $\sigma$, the points $(\tau_0^\sigma, z_1^\sigma, \dots , z_n^\sigma)$ are not Galois conjugates of each other.
We have
$$
a_1 z_1^\sigma + \dots +a_n z_n^\sigma \in \Z + \tau_0^\sigma \Z,
$$
with $H_2(a_1, \dots , a_n)\ll |D_0|^{\gamma_1}$.

Recall that, by \eqref{heightequiv} and \eqref{tauheightbound}, we have $H_2(\tau_0^\sigma)\ll |D_0|^3$. This implies $\pi^{-1}(\bo{c}_0^\sigma) \in \mc{Z}(|D_0|^{\gamma_7})$, for some positive $\gamma_7$. Now, by \eqref{BrSig} and \eqref{degcln}, we have $d_0\gg |D_0|^{1/3}$.

Thus, there are $\gg |D_0|^{\frac{1}{3}} $ different $(\tau_0^\sigma, z_1^\sigma, \dots , z_n^\sigma)$ in $\mc{Z}(|D_0|^{\gamma_7})$. Applying Proposition \ref{mainest} with $\epsilon= 1/(4 \gamma_7) $ we get a contradiction if $|D_0|$ is large enough.
Therefore, $|D_0|$ is bounded giving us the claim of Theorem \ref{mainthm}.

\section*{Acknowledgments}

We would like to thank Gareth Jones and Harry Schmidt for many useful discussions and Philipp Habegger for his comments and suggestions.

\bibliographystyle{plain}
\bibliography{bibliography.bib}

\end{document}